\newif\ifarxiv\arxivfalse

\RequirePackage[dvipsnames]{xcolor}
\ifarxiv
	\documentclass[10pt, twoside, reqno]{amsart}
\else
	\RequirePackage{amsmath}
	\documentclass[smallextended]{svjour3}
	\usepackage[11pt]{extsizes}
	\usepackage[a4paper, total={15cm, 23cm},left=2.5cm,right=2.5cm,bottom=3cm,top=3cm]{geometry}
\fi
	\PassOptionsToPackage{noend}{algpseudocode}
	\PassOptionsToPackage{bookmarksdepth=4}{hyperref}
	
	\ifarxiv\else
		\makeatletter
			\let\cl@chapter\undefined
		\makeatother

		\smartqed
	\fi

	\usepackage{todonotes}
	\usepackage{bm}
	\usepackage{booktabs}
	\usepackage{multirow}
\usepackage[linesnumbered]{algorithm2e}

\SetInd{0.5em}{1em} 
    \usepackage[%
		eqreset=section,
		thmreset=section,
	]{myPreamble}
    \usepackage{enumitem}
 
	\makeatletter
		\newcommand\mynum[1]{$ˆ{\@fnsymbol{#1}}$}
	\makeatother
	
	\CreateNewTheorem[Fact]{fact}[dummythm]{Fact}
	\ifarxiv\else
	    
	\fi
	
	\newcommand\C{\mathcal{C}}

	\makeatletter
		\newcommand\D[1][h]{%
			\@ifnextchar_{\operatorname D}{%
				\ifstrempty{#1}{\operatorname D}{\operatorname D_{#1}}%
			}
		}
	\makeatother
	
\Crefname{equation}{}{}

\newcommand{\TheKeywords}{%
	Nonsmooth nonconvex optimization,
        Relatively weakly convex functions, 
        Projected subgradient methods,
        Linear convergence.%
}
\newcommand{\TheSubjclass}{%
	90C06, 
	90C25, 
	90C26, 
	49J52, 
	49J53.
}
\newcommand{\TheTitle}{%
	Relative Weak Convexity and Projected Subgradient Methods: Analysis and Convergence%
}
\newcommand{\TheShortTitle}{%
	Relative Weak Convexity and Projected Subgradient Methods%
}
\newcommand{\TheShortAuthor}{%
	M. Rahimi and M. Ahookhosh%
}
\newcommand{\TheFunding}{%
	We acknowledge the support by the \emph{Research Foundation Flanders (FWO)} research project G081222N and
	\emph{UA BOF DocPRO4 projects} with ID 46929 and 48996.
}
\newcommand{\TheAddressUA}{%
		Department of Mathematics, University of Antwerp, Middelheimlaan 1, B-2020 Antwerp, Belgium}
\newcommand{\TheAbstract}{%
We introduce the class of relatively weakly convex functions, which extends the classical notion of weak convexity by measuring nonconvexity relative to a distance-generating function. We investigate the fundamental properties of this function class, establishing characterization results, calculus rules, and illustrative examples. We further analyze the associated optimization landscape and identify a neighborhood of the set of global minimizers that is free of saddle points. Motivated by this geometric structure, we propose the Projected SubGradient Algorithm (PSGA) with several step-size strategies. Under a sharpness error bound, we prove that, when initialized within this saddle-point-free neighborhood, the iterates generated by PSGA converge to a global minimizer for each of the proposed step-size strategies. Furthermore, linear convergence is established for the geometrically decaying step-size strategy.
}

\begin{document}

\ifarxiv
	\title[\TheShortTitle]{\TheTitle}
	\author[\TheShortAuthor]{%
		Morteza Rahimi\textsuperscript{1},\ 
		Masoud Ahookhosh\textsuperscript{2},\ and\ 

	}
	\thanks{\textsuperscript{1}\TheAddressKU}
	\thanks{\textsuperscript{2}\TheAddressMons}
	\thanks{\TheFunding}
	\keywords{\TheKeywords}
	\subjclass{\TheSubjclass}


		\begin{abstract}
			\TheAbstract
		\end{abstract}

		\maketitle
        
\else

	\journalname{....}

	\title{\TheTitle\thanks{\TheFunding}}
	\titlerunning{\TheShortTitle}

	\author{%
            Morteza Rahimi\and
		Masoud Ahookhosh%
	}
	\authorrunning{\TheShortAuthor}
    
    \institute{%
		M. Rahimi and M. Ahookhosh
		\at
			\TheAddressUA.
			{\tt
				\{%
					\href{mailto:morteza.rahimi@uantwerpen.be}{morteza.rahimi},%
					\href{mailto:masoud.ahookhosh@uantwerpen.be}{masoud.ahookhosh}%
				\}%
				\href{mailto:morteza.rahimi@uantwerpen.be, masoud.ahookhosh@uantwerpen.be}{@uantwerpen.be}%
			}%
	}%

	\maketitle

	\begin{abstract}
		\TheAbstract
		\keywords{\TheKeywords}%
		\subclass{\TheSubjclass}%
	\end{abstract}
\fi

\vspace{-5mm}
\section{Introduction}\label{sec:Introduction}
Let us consider the nonsmooth and nonconvex optimization problem
\begin{equation}\label{problem}
        \min_{x\in X} f(x),
\end{equation}
under the following standing assumption:

\begin{ass}[Basic assumptions]\label{ass:basic} For problem \cref{problem}, we assume:
\begin{description}
\item[(a)] The function \(\func{h}{\R^n}{\Rinf}\) is a Legendre function;
\item[(b)] The function $\func{f}{\R^n}{\Rinf=\R\cup \{+\infty\}}$ is proper, nonsmooth, and weakly convex relative to the distance-generating function $h$ (see \Cref{Rsharpbound} for details);
\item[(c)]  $X\subseteq \interior\dom(h)$ is a nonempty, closed, and convex set representing our feasible set;
\item[(d)] The function $f$ admits a sharpness error bound with constant $\mu$ (see \Cref{Rsharpbound});
\item[(e)] The set of minimizers $\mathcal{X}^*:=\argmin_{x\in \R^n} f(x)$ is nonempty, with $f^*>-\infty$ denoting the optimal value.
\end{description}
\end{ass}

Nonsmooth optimization problems are ubiquitous in modern scientific and engineering applications, arising naturally from the interplay between data fidelity terms and regularization mechanisms. For instance, in machine learning, important examples include support vector machines with hinge-loss objectives, sparse learning models such as the Lasso based on $\ell_1$-regularization, matrix completion and low-rank recovery problems involving nuclear norms, and deep neural networks with ReLU and max-pooling operations; see, e.g., \cite{ahookhosh2019optimal,beck2017first,candes2009exact,goodfellow2016deep,shalev2014understanding,tibshirani1996regression}. Nonsmoothness in the data fidelity term often originates from robust or margin-based losses, including hinge, absolute-value, quantile, and Huber-type losses, which improve robustness to outliers and enhance predictive performance; see \cite{beck2017first,huber1992robust,shalev2014understanding}. In regularization, nonsmooth penalties are widely employed to enforce structural properties such as sparsity, group sparsity, low rank, and piecewise smoothness through $\ell_1$-, mixed-, nuclear-, and total variation norms; cf. \cite{beck2017first}. These models are fundamental in many fields of our scientific communities, motivating efficient algorithms for nonsmooth optimization.

Numerous first-order algorithms have been proposed for nonsmooth optimization. However, many effective schemes, including proximal-point, proximal gradient, Douglas–Rachford splitting, and primal–dual methods, require the objective function to exhibit favorable structures, such as separability, composite smooth or nonsmooth forms, or efficiently computable proximal mappings; cf. \cite{beck2017first,combettes2011proximal}. In many applications, however, the objective function may lack sufficient structure for these methods to be directly applicable or computationally efficient. By contrast, subgradient methods remain broadly applicable under minimal assumptions, requiring only the ability to compute a subgradient of the objective function; see, e.g., \cite{ahookhosh2018optimal,ahookhosh2017optimal,gaudioso2022essentials,neumaier2016osga}. Their simplicity, low per-iteration computational cost, and modest memory requirements make them particularly attractive for large-scale and high-dimensional problems where more sophisticated structured methods may be impractical.

Although the convergence theory of subgradient methods is well developed in the convex setting (e.g., \cite{bertsekas1999nonlinear,nedic2001incremental,nesterov2018lectures,nesterov2009primal,polyak1969minimization,polyak1987introduction,shor2012minimization}), considerably less is known in the nonconvex regime, especially regarding rates of convergence. Existing results guarantee convergence for several important classes of nonconvex functions, including weakly convex \cite{davis2018subgradient,li2021weakly}, paraconvex \cite{rahimi2024projected}, path-differentiable \cite{bolte2022long}, quasiconvex \cite{kiwiel2001convergence,hu2020convergence,quiroz2015inexact}, and tame functions \cite{davis2020stochastic}. However, many practical models arising in applications do not satisfy these structural assumptions. Moreover, even when convergence of existing methods can be guaranteed, explicit complexity estimates and quantitative convergence rates are often unavailable. This motivates the development of a broader theoretical framework for establishing both convergence guarantees and convergence rates of subgradient methods over more general classes of nonsmooth and nonconvex optimization problems, which is the focus of this work.

\vspace{-4mm}
\subsection{{\bf Contribution}}
Our main contributions are summarized as follows:
\begin{description}
  \item[{\bf (i)}] {\bf Class of relatively weakly convex functions.}
  We introduce the class of relatively weakly convex functions, which generalizes classical weak convexity by measuring nonconvexity relative to a reference geometry induced by a distance-generating function. This framework allows us to capture a broader range of nonsmooth and nonconvex objectives arising in real-world applications. We provide a detailed characterization of this class (i.e., \Cref{pro:Equi-RWC} and \Cref{pro:chara-RWC}), along with a rich set of examples illustrating its relevance in applications. We further develop a calculus for relatively weakly convex functions, establishing rules for preserving the property under common operations such as sums, compositions, and perturbations (i.e., \Cref{pro:relWeakConvCal}), and we show the composition of a convex function with a relatively Bregman smooth Jacobian mapping is relatively weakly convex (i.e., \Cref{pro:composition-RWC}). In addition, we study the geometric landscape of these functions and show that, under suitable conditions, there exists a tubular neighborhood around the set of minimizers that is free of spurious local minima and saddle points (i.e., \Cref{pro:dis2}).
  \item[{\bf (ii)}] {\bf Convergence analysis of subgradient methods.}
  We analyze the convergence of subgradient methods under several step-size rules, including constant and diminishing step-sizes. Under a sharpness assumption and assuming the iterates are initialized within a tubular neighborhood of the solution set, we show the convergence (i.e., \Cref{thm:Conv-ND}, \Cref{thm:Conv-SSN}, and \Cref{thm:DS}) and establish linear convergence rates (i.e., \Cref{thm:PSGA-C1} and \Cref{thm:GDS}). These results demonstrate that, despite the lack of smoothness and convexity, appropriately designed subgradient schemes can achieve fast local convergence when combined with suitable geometric properties of the objective function.
\end{description}

\vspace{-4mm}
\subsection{{\bf Organization}}
The remainder of the paper is organized as follows. \Cref{sec:Preliminaries} is devoted to preliminaries and facts necessary in the other sections. \Cref{sec:relweakconv} introduces the class of relatively weakly convex functions and presents its characterization, calculus, and properties. \Cref{sec:subGradMethod} presents the subgradient method and its convergence analysis.
Finally, \Cref{sec:conclusion} delivers our conclusion.

\section{Preliminaries}\label{sec:Preliminaries}

\subsection{{\bf Notation}}

In this paper, we denote the standard {\it inner product} and the {\it Euclidean norm} in $n-$dimensional real {\it Euclidean space}
$\R^n$ by $\langle \cdot,\cdot\rangle$ and 
$\|\cdot\|=\sqrt{\langle \cdot, \cdot\rangle}$, respectively. For a real $m \times n$ matrix $A = [a_{ij}] \in \mathbb{R}^{m\times n}$ the {\it Frobenius norm} is $\Vert A\Vert_F =\sqrt{\sum_{i,j} \vert a_{ij}\vert^2}$.
The set of {\it natural numbers} is denoted by $\N$. The notion $x^T$ represents the {\it transpose} of a vector $x\in \mathbb{R}^n$.
The open ball centered at $x\in\R^n$ with radius $r>0$ is expressed as $\mathbb{B}(x;r)$.
The {\it interior} and {\it closure} of a set $S\subseteq \mathbb{R}^n$ are denoted by 
$\interior S$ and $\cl S$, respectively.
The {\it Euclidean distance} from a point $x\in\R^n$ to a nonempty set $S\subseteq \mathbb{R}^n$ is defined as $\dist(x,S)=\inf_{z\in S} \|z-x\|$. Moreover, the {\it Euclidean projection} of the point $x$ onto $S$ is given by
$\operatorname{proj}_S(x):=\argmin_{z\in S} \|z-x\|$.

For a given function $\func{\varphi}{\R^n}{\Rinf=\R\cup\{+\infty\}}$, the {\it effective domain} of
$\varphi$ is defined as $\dom(\varphi) := \{x \in \R^n : \varphi(x) < +\infty\}$.
The function $\varphi$ is said to be {\it proper} if $\dom(\varphi) \neq \emptyset$.
The convex conjugate of $\varphi$ is given by
$\conj\varphi(x) := \sup_{z\in\R^n} \{ \innprod{x}{z} -\varphi(z)\}$.
The {\it indicator} function of a nonempty set $S\subseteq \mathbb{R}^n$, $\func{\delta_S}{\R^n}{\Rinf}$, is defined as $\delta_{S}(x)=0$ if $x\in S$, and $\delta_{S}(x)=+\infty$ otherwise.

In this paper, we consider a class of functions that we subsequently prove to be subdifferentially regular \cite[Definition 2.3.4]{clarke1990optimization}. This property ensures that the standard subdifferentials in variational analysis \cite{rockafellar2011variational,clarke1990optimization,mordukhovich2006variational} coincide. Hence, we work throughout with the Fr\'{e}chet subdifferential.
Let $\func{\varphi}{\R^n}{\Rinf}$ be a proper function that is locally Lipschitz at $x\in \dom(\varphi)$. The {\it Fr\'{e}chet subdifferential} of $\varphi$ at $x$, denoted by $\partial \varphi(x)$, is the set of all vectors $\zeta\in \R^n$ satisfying
$$\varphi(y) \ge \varphi(x) + \innprod{\zeta}{y-x} + o(\|y-x\|) \quad\text{as}\quad y \to x.$$
Given a nonempty set $S\subseteq \dom(\varphi)$, a point $\ov x\in S$ is said to be a {\it stationary} point of the problem $\displaystyle\min_{x\in S} \varphi(x)$ if
$0\in \partial (\varphi+\delta_S)(\ov x)$.

\subsection{{\bf Bregman distance}}

\begin{defin}[{\bf Legendre function}]\label{def:kernel}%
    Let \(\func{h}{\R^n}{\Rinf}\) be a proper, lsc, and convex function.
	It is said to be
	\begin{enumerateq}
        \item
            a \DEF{distance-generating (kernel) function}, if \(\interior\dom h\neq\emptyset\) and $h\in\C^1(\interior\dom h)$
	\item
		\DEF{suppercoercive}, if
            $\lim_{\|x\|\to\infty}\tfrac{h(x)}{\|x\|}=\infty$;
	\item
		\DEF{essentially smooth}, if $h$ is differentiable on
            \(\interior\dom h\neq\emptyset\) and $\|\nabla h(x_k)\|\to\infty$ for every sequence $\seq{x_k}\subseteq\interior\dom h$ converging to a boundary point of $\dom h$;
	\item
		\DEF{a Legendre kernel function} if it is essentially
            smooth and strictly convex.
	\end{enumerateq}
\end{defin}

\begin{rem}\label{rem:legendre}~
        It is better to mention that a kernel function $\func{h}{\R^n}{\Rinf}$ is classified a Legendre kernel if it satisfies two primary properties: (i) essentially smooth; (ii) essential strict convexity (strict convexity in
 every convex subset of \(\dom(\partial h)\)) which ensures that $\argmin h$ is a singleton. However, under the essential smoothness property, essential strict convexity of $h$ is equivalent to strict convexity on \(\interior\dom(\partial h)\). 
		Additionally, the function $h$ is a Legendre kernel if and only if so is $h^\#$. In this case, the gradient $\nabla h$ forms a bijective mapping between \(\interior\dom h\) and \(\interior\dom\conj h\), satisfying:
        $\dom(h) = \interior \dom(h),$ $\range \nabla h = \dom(\nabla\conj h) = \interior \dom(\conj h),$
        \((\nabla h)^{-1}=\nabla\conj h,\) and \(\conj h(\nabla h(x))=\innprod{x}{\nabla h(x)}-h(x),\) for all \(x\in\interior \dom(h).\)
        Furthermore, if $h\in \C^2$ is a Legendre kernel function and $\nabla^2 h\succ 0$ on $\interior \dom(h)$, then $\conj h\in \C^2$, \cite{rockafellar1977higher}.
\end{rem}

\begin{defin}[{\bf Bregman distance}]
     For a kernel function \(\func{h}{\R^n}{\Rinf}\), the \DEF{Bregman distance} $\func{\D}{\R^n\times\R^n}{\Rinf}$ is given by
	\begin{equation*}\label{eq:bregman}
		\D(x,y)
	{}\coloneqq{}
		\begin{ifcases}
			h(x)-h(y)-\innprod{\nabla h(y)}{x-y}~~ &~ y\in\interior\dom (h)
		\\
			\infty\otherwise.
		\end{ifcases}
	\end{equation*}
\end{defin}

The function \(\D(x,y)\) measures the proximity between \(x\) and \(y\). As some basic properties, \(\D\in\C^0\Big(\dom(h)\times\interior\dom(h)\Big)\) and the function $\D(\cdot,y)$ is differentiable with $\tfrac{\partial \D}{\partial x}(x,y)=\nabla h(x) -\nabla h(y)$.  The function \(h\) is convex if and only if \(\D(x,y)\geq 0\) for all 
\(x\in\dom (h)\) and \(y\in\interior \dom (h)\).
If the function \(h\) is convex, then the function $\D(\cdot,y)$ is proper, lsc, and convex on \(\interior \dom (h)\).
If \(h\) is strictly convex, then \(\D(x,y)= 0\) if and only if \(x=y\).
If the function \(h\) is essentially strictly convex, then $\D(\cdot,y)$ is coercive and essentially strictly convex on \(\interior \dom (h)\).

Let us now define a measure for the lack of symmetry for Bregman distances. 

\begin{defin} [{\bf Symmetry for Bregman distances}]
\label{def:symmRelSmooth}
    Let \(\func{h}{\R^n}{\Rinf}\) be a Legendre function. The {\it symmetry coefficient} of the Bregman distance $\D$ is given by 
    \begin{equation*}
    \theta_h:= \inf \set{\tfrac{\D(x,y)}{\D(y,x)} ~\mid~ (x,y)\in  \interior\dom(h) \times \interior\dom(h),~ x\neq y}, 
    \end{equation*}
    which satisfies $\theta_h\in [0,1]$.
\end{defin}

It comes directly from this definition that
\begin{equation}\label{eq:symmetery}
    \theta_h \D(x,y) \leq \D(y,x)\leq \theta_h^{-1} \D(x,y) \quad \forall x,y\in \interior\dom(h),
\end{equation}
where we adapt the convention $\tfrac{1}{0}=+\infty$ and $+\infty\times \alpha=+\infty$ 
for $\alpha\geq 0$.

The following results are a direct consequence of the definition of the Bregman distance.

\begin{fact}[{\bf Properties of Bregman distances}]\label{fact:BD}
    Let \(\func{h}{\R^n}{\Rinf}\) be a kernel function and $S\subseteq \interior\dom(h)$ be a nonempty convex set.
    If the function 
    \(h\) has \(\nu\)-H\"older continuous gradient with exponent $\nu\in (0,1]$ and constant $G_{h}>0$ on $S$, i.e.,
    \[
        \|\nabla h(y)- \nabla h(x)\| \leq G_{h}\|y-x\|^{\nu},\quad \forall x,y\in S,
    \]
    then \(\D(y,x) \le \tfrac{G_{h}}{1+\nu}\|y-x\|^{1+\nu}\) for all \(x,y\in S\).
\end{fact}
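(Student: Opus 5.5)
The bound follows from the integral form of the mean value theorem applied to \(h\) along the segment from \(x\) to \(y\), together with the H\"older estimate on \(\nabla h\). Fix \(x,y\in S\). Since \(S\) is convex, the segment \(z_t:=x+t(y-x)\), \(t\in[0,1]\), lies in \(S\subseteq\interior\dom(h)\). On this open set \(h\) is \(\C^1\), because \(h\) is a kernel function. Hence \(\varphi(t):=h(z_t)\) is continuously differentiable on \([0,1]\), with \(\varphi'(t)=\innprod{\nabla h(z_t)}{y-x}\). Since \(x\in\interior\dom(h)\), the definition of the Bregman distance applies in its first branch, and the fundamental theorem of calculus gives
\[
\D(y,x)=h(y)-h(x)-\innprod{\nabla h(x)}{y-x}=\int_0^1 \innprod{\nabla h(z_t)-\nabla h(x)}{y-x}\,dt .
\]

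Next I bound the integrand. By Cauchy--Schwarz and the H\"older assumption applied to the pair \(z_t,x\in S\),
\[
\innprod{\nabla h(z_t)-\nabla h(x)}{y-x}\le G_h\|z_t-x\|^{\nu}\|y-x\| = G_h t^{\nu}\|y-x\|^{1+\nu}.
\]
Integrating over \([0,1]\) and using \(\int_0^1 t^\nu\,dt=\tfrac{1}{1+\nu}\) yields \(\D(y,x)\le \tfrac{G_h}{1+\nu}\|y-x\|^{1+\nu}\), which is the claim.

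No step here is a real obstacle. The only point requiring care is justifying the integral representation: the whole segment must stay in the region where \(h\) is \(\C^1\), and \(x\) must lie in \(\interior\dom(h)\) so that the first branch of the Bregman distance applies. Both follow from the convexity of \(S\) and the inclusion \(S\subseteq\interior\dom(h)\).
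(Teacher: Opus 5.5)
Your proof is correct. The integral representation
\(\D(y,x)=\int_0^1\langle\nabla h(x+t(y-x))-\nabla h(x),\,y-x\rangle\,dt\)
is valid because the segment stays in \(S\subseteq\interior\dom(h)\), where \(h\) is \(\C^1\). Combined with Cauchy--Schwarz, the H\"older bound applied to the pair \(x+t(y-x),\,x\in S\), and \(\int_0^1 t^{\nu}\,dt=\tfrac{1}{1+\nu}\), it gives exactly the stated estimate.

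The paper states this as a fact without proof, calling it a direct consequence of the definition of the Bregman distance. Your argument is the standard justification it implicitly relies on.
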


\subsection{{\bf Sharpness error bound condition}}

An error bound is a crucial tool in optimization for achieving faster convergence rates in algorithms. In this study, we focus on a specific type of error bound, named the sharpness error bound.

\begin{defin}[{\bf Sharpness error bound}]\label{Rsharpbound}
    A function $\func{\varphi}{\R^n}{\Rinf}$ is said to admit a sharpness error bound with constant $\mu > 0$ on a nonempty set $S \subseteq \R^n$ if
    \begin{equation*}\label{eq-holerrbound2}
        \mu \dist(x; \mathcal{X}^*) \leq \varphi(x) - \varphi^{*}, \quad \forall x \in S,
    \end{equation*}
    where $\varphi^* := \inf_{x \in S} \varphi(x)$ and the set of minimizers $\mathcal{X}^* := \{x \in S \,:\, \varphi(x) = \varphi^{*}\}$ is nonempty.
    \end{defin}

\section{{\bf Relatively weakly convex function}}\label{sec:relweakconv}
In this section, we introduce the class of relatively weakly convex functions and investigate their characterizations, representative examples, calculus properties, and optimization landscape. Notably, we establish the existence of a neighborhood around the solution set that is free of saddle points.

Let us begin with the definition of relative weak convexity, as an extension of weak convexity.

\begin{defin}[\textbf{Relative weak convexity}]
\label{def:relSmooth}
Let \(\func{h}{\R^n}{\Rinf}\) be a convex function, let $S\subseteq \R^n$ be a convex nonempty set, and let $\rho> 0$. A function \(\func{\varphi}{\R^n}{\Rinf}\) is said to be
\DEF{$\rho$-weakly convex relative to $h$ on $S$} if \(\varphi+\rho h\) is convex on $S$.
When the function $h$ is clear from the context, we may refer to $\varphi$ as relatively $\rho$-weakly convex, or simply as weakly convex relative to $h$ (relatively weakly convex) when omitting the constant $\rho$.
\end{defin}

Relatively weakly convex functions constitute an important class with broad applicability. In the following, we present several representative examples that arise in different settings.

\begin{exa}[{\bf Representative relatively weakly convex functions}] We next mention several classes of relatively weakly convex functions. 
    \begin{enumerateq}
        \item
            In the literature, a specific case of relative weak convexity has been identified based on the choice of $h$. 
            Indeed, when $h=\|\cdot\|^2,$  the concept of relatively weakly convexity corresponds to weak convexity \cite{rolewicz1979paraconvex,vial1983strong}, i.e. a function \(\func{\varphi}{\R^n}{\Rinf}\) is said to be \DEF{\(\rho\)-weakly convex} with \(\rho>0\) if the function \(\varphi(\cdot)+\rho \|\cdot\|^2\) is convex.
        \item
            Another example of relatively weakly convex functions lies in the class of paraconvex functions \cite{rolewicz1979paraconvex,jourani1996open,rahimi2024projected}, i.e., A function $\func{\varphi}{\R^n}{\Rinf}$ is said to be $\nu$-paraconvex function on a convex set $S\subseteq \dom(\varphi)$ for some $0<\nu\leq 1$ if there exists $\rho \geq 0$ such that for any $x,y \in S$ and $\lambda \in [0,1]$,
            \begin{equation}\label{eq:para1}
                \varphi(\lambda x+(1-\lambda)y)\leq \lambda \varphi(x) +(1-\lambda) \varphi(y) + \rho \min\{\lambda , 1-\lambda\} \Vert x-y\Vert ^{1+\nu}.
            \end{equation}
            In particular, Rahimi et al. \cite[Proposition 3.6]{rahimi2024projected} demonstrated that if
             the function $\varphi(\cdot)+\rho \Vert \cdot\Vert ^{\nu+1}$ is convex, i.e., the function $\varphi$ is $\rho$-weakly convex relative to $h=\|\cdot\|^{\nu+1}$ with $\nu\in (0,1]$ and $\rho>0$, then $\varphi$ is $\nu$-paraconvex on $\interior \dom(h)$.
        \item
            One notable example of relatively weakly convex functions arises in the context of difference weakly convex (DWC) optimization problems, formulated as:
            \begin{equation}\label{eq:DCporblem}
                \min_{x\in S} \varphi(x):=g(x)-h(x),
            \end{equation}
            where the functions $g,h:\R^n\to \R$ respectively are $\rho_g$ and $\rho_h$-weakly convex and $h$ is the kernel function. Therefore, the function \(\varphi\) is $\rho$-weakly convex relative to $\ov h$ at which $\rho=\max\{\rho_g, \rho_h\}$ and $\ov h(\cdot)= h(\cdot)+\rho_h \|\cdot\|^2$.
        \item
            Let us consider the class of composite optimization problems:
            \begin{equation}\label{eq:compositeporblem}
                \min_{x\in S} \varphi(x):=\psi(F(x)),
            \end{equation}
            where $\psi:\R^n\to \R$ is a convex and Lipschitz function, and the mapping $F:\R^m\to \R^n$ is relatively Bregman Jacobian smooth on the nonempty convex set $S$; see \Cref{def:rbjs}.
            It can be indicated that the function $\varphi=\psi \circ F$ is relatively weakly convex relative; see \Cref{pro:composition-RWC}.
        \item
            Any relatively Bregman gradient smooth function $\func{\varphi}{\R^n}{\R}$ is a relatively weakly convex function.
    \end{enumerateq}
\end{exa}

\begin{exa}\label{exa:example_saddle1}
    Let us consider the function $\func{f}{\R^2}{\R}$ given by \(f(x,y)= x^2 + y^2 -1.5 y^{\tfrac{4}{3}}\).
    We compute
    \[
    \nabla f(x,y) = \begin{bmatrix}
        2x\\
        2y - 2 y^{\tfrac{1}{3}}
    \end{bmatrix},\quad\quad
    \nabla^{2} f(x,y) = \begin{bmatrix}
        2  &  0\\
        0  & 2 - \tfrac{2}{3y^{\tfrac{2}{3}}}
    \end{bmatrix}.
    \]
    It is evident that the function $f$ is not a convex function possessing a saddle point.
    Specifically, the points \((0,\pm 1)\) and \((0,0)\) are stationary points: \((0,\pm 1)\) are global minima, while \((0,0)\) is a saddle point. This is illustrated by the behavior of \(f\) in different directions near \((0,0)\): along $y=0$ we have $f(x,y)=x^2>0$, while along $x=0$ we get \(f(x,y)= y^2 -1.5 y^{\tfrac{4}{3}}<0\) for $0<|y|<\sqrt{3.375}$.
    The Hessian matrix further confirms that \((0,0)\) is a degenerate saddle point, as one eigenvalue tends to \(-\infty\) as \(y \to 0\). Moreover, the function \(f\) is \(1.5\)-weakly convex relative to \(h(x, y) = y^{\tfrac{4}{3}}\). It is also worth noting that any ball of radius \(r < 1\) centered at a global minimizer contains no saddle points. \Cref{fig:weakconvExa1} illustrates the contour and surface plots of the function.
    \begin{figure}[htp]
        \centering
        \subfloat{\includegraphics[width=7.5cm]{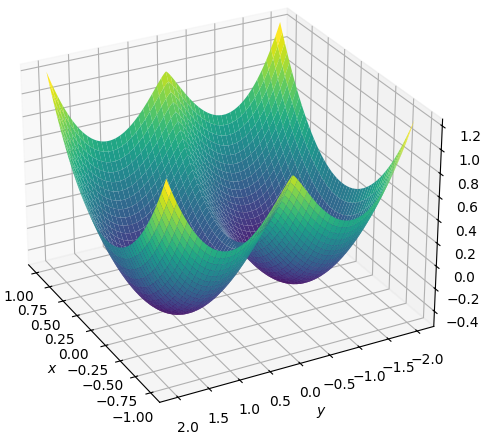}}%
        \qquad
        \subfloat{\includegraphics[width=7.5cm]{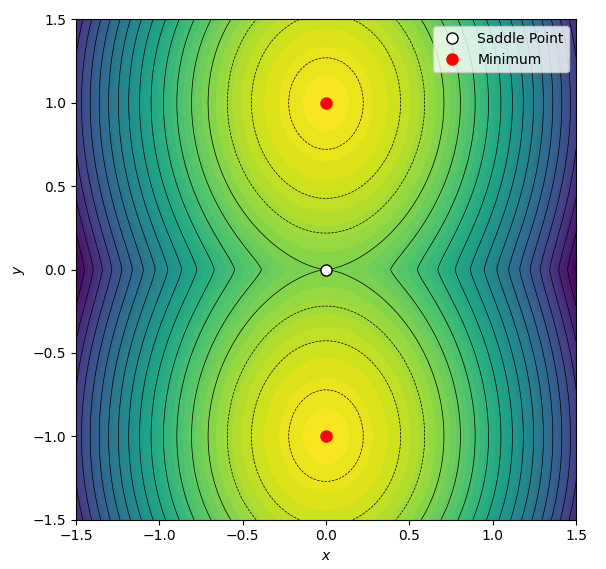}}%
        \caption{Stationary points of the function $f(x,y)= x^2 + y^2 -1.5 y^{\tfrac{4}{3}}$: global minima at \((0,\pm 1)\) (red points) and a saddle point at \((0,0)\) (white point).}\label{fig:weakconvExa1}
    \end{figure}
\end{exa}

The following proposition is a direct result of the definition of relative weak convexity.

\begin{prop}[\textbf{Characterizations I of relative weak convexity}]\label{pro:Equi-RWC}
    Let \(\func{h}{\R^n}{\Rinf}\) be a convex function, let $S\subseteq \interior \dom(h)$ be a convex nonempty set, and let $\rho> 0$. Then, the following assertions are equivalent:
    \begin{enumerateq}[label=(\alph*), ref=(\alph*)]
        \item\label{pro:Equi-RWC:1}
            the function \(\func{\varphi}{\R^n}{\Rinf}\) is $\rho$-weakly convex relative to $h$ on $S$;
        \item\label{pro:Equi-RWC:2}
            for a given $y\in \interior \dom(h)$, the function \(x\mapsto \varphi(x)+\rho\D(x,y)\) is convex on $S$;
        \item\label{pro:Equi-RWC:3}
            for any $x_1,x_2 \in S$ and any $\lambda\in[0,1]$ one holds
            \begin{equation}\label{eq:weakconvexity}
                \varphi(\lambda x_1+(1-\lambda)x_2) \leq \lambda \varphi(x_1) + (1-\lambda) \varphi(x_2) + \rho\Big(\lambda\D\big(x_1,\lambda x_1+(1-\lambda)x_2\big) +(1-\lambda)\D\big(x_2,\lambda x_1+(1-\lambda)x_2\big) \Big).
            \end{equation}
    \end{enumerateq}
\end{prop}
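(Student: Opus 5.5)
The plan is to prove the cycle \ref{pro:Equi-RWC:1}$\Leftrightarrow$\ref{pro:Equi-RWC:2} and \ref{pro:Equi-RWC:1}$\Leftrightarrow$\ref{pro:Equi-RWC:3}. Both reduce to the observation that $\rho h$ and $\rho\D(\cdot,y)$ differ by an affine function, and that the Bregman terms in \eqref{eq:weakconvexity} are exactly the Jensen gap of $h$ at the convex combination.

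\textbf{\ref{pro:Equi-RWC:1}$\Leftrightarrow$\ref{pro:Equi-RWC:2}.} Fix $y\in\interior\dom(h)$. By definition,
\[
\varphi(x)+\rho\D(x,y)=\varphi(x)+\rho h(x)-\rho h(y)-\rho\innprod{\nabla h(y)}{x-y}.
\]
So $x\mapsto\varphi(x)+\rho\D(x,y)$ equals $\varphi+\rho h$ plus an affine function of $x$. Adding an affine function preserves and reflects convexity on the convex set $S$, and this gives the equivalence immediately. Since $S\subseteq\interior\dom(h)$, all quantities are finite where needed. One caveat: $\D$ requires differentiability of $h$ at $y$. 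I would simply use that $h$ is a kernel function, as in the standing Assumption~\ref{ass:basic}; otherwise any subgradient of $h$ at $y$ could replace $\nabla h(y)$.

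\textbf{\ref{pro:Equi-RWC:1}$\Leftrightarrow$\ref{pro:Equi-RWC:3}.} Set $z=\lambda x_1+(1-\lambda)x_2\in S$. The key identity is
\[
\lambda\D(x_1,z)+(1-\lambda)\D(x_2,z)=\lambda h(x_1)+(1-\lambda)h(x_2)-h(z)-\innprod{\nabla h(z)}{\lambda x_1+(1-\lambda)x_2-z}.
\]
The last inner product vanishes because $\lambda(x_1-z)+(1-\lambda)(x_2-z)=0$. Hence the right-hand side of \eqref{eq:weakconvexity} equals
\[
\lambda\varphi(x_1)+(1-\lambda)\varphi(x_2)+\rho\bigl(\lambda h(x_1)+(1-\lambda)h(x_2)-h(z)\bigr).
\]
Moving $\rho h(z)$ to the left, \eqref{eq:weakconvexity} becomes exactly the convexity inequality of $\varphi+\rho h$ between $x_1$ and $x_2$. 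Since this holds for all $x_1,x_2\in S$ and $\lambda\in[0,1]$, we get \ref{pro:Equi-RWC:1}, and conversely.

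The only point needing care is the handling of infinite values. If $\varphi(x_i)=+\infty$, the inequalities are trivially consistent, and the rearrangement is legitimate because $h$ is finite on $S$; the $\D$ terms are finite since $z\in S\subseteq\interior\dom(h)$. Beyond this bookkeeping there is no real obstacle: the main step is the cancellation of the gradient term in the identity above.
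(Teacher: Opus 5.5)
Your proof is correct and follows the paper's own route. For (a)$\Leftrightarrow$(b), $\rho D_h(\cdot,y)$ and $\rho h$ differ by an affine function. For (a)$\Leftrightarrow$(c), you rearrange the convexity inequality for $\varphi+\rho h$. You also make explicit the cancellation $\lambda(x_1-z)+(1-\lambda)(x_2-z)=0$, which turns the Jensen gap of $h$ into the weighted Bregman terms; the paper leaves that step implicit.
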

\begin{proof}
    It is clear that $\rho$-weak convexity of $\varphi$ relative to $h$ on $S$ is equivalent to the convexity of function $x\mapsto\varphi(x)+\rho h(x) -\rho h(y) -\rho \innprod{\nabla h(y)}{x-y}$ for a fixed $y\in \interior \dom(h)$. Thus, Assertions \ref{pro:Equi-RWC:1} and \ref{pro:Equi-RWC:2} are equivalent.
    To show the equivalence of \ref{pro:Equi-RWC:1} and \ref{pro:Equi-RWC:3}, assume that the function $\varphi$ is $\rho$-weakly convex relative to $h$ on $S$; that is, for any $x_1,x_2\in S$ and any $\lambda\in[0,1]$
    \[
    \varphi(\lambda x_1+(1-\lambda)x_2) +\rho h(\lambda x_1+(1-\lambda)x_2) \leq \lambda \Big(\varphi(x_1)+\rho h(x_1)\Big) + (1-\lambda) \Big(\varphi(x_2) + \rho h(x_2)\Big).
    \]
    Rearranging terms, this can be equivalently written as
    \begin{align*}
        \varphi(\lambda x_1+(1-\lambda)x_2) \leq \lambda \varphi(x_1) &+ (1-\lambda) \varphi(x_2)\\
        &+ \rho \bigg( \lambda \Big(h(x_1)- h(\lambda x_1+(1-\lambda)x_2)\Big) + (1-\lambda) \Big(h(x_2) - h(\lambda x_1+(1-\lambda)x_2)\Big)\bigg),
    \end{align*}
    which ensures \cref{eq:weakconvexity}.
\end{proof}

\begin{cor}
    Let \(\func{h}{\R^n}{\Rinf}\) be a convex function, let $S\subseteq \interior \dom(h)$ be a convex nonempty set, and let $\rho> 0$. If the function \(\func{\varphi}{\R^n}{\Rinf}\) is $\rho$-weakly convex relative to $h$ on $S$, then, for any $x_1,x_2\in S$ and any $\lambda\in[0,1]$,
    \[
     \varphi(\lambda x_1+(1-\lambda)x_2) \leq \lambda \varphi(x_1) + (1-\lambda) \varphi(x_2) + \rho\min\{\lambda ,\theta_h^{-1}(1-\lambda)\} \D(x_1,x_2).
    \]
\end{cor}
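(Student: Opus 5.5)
The plan is to start from \Cref{pro:Equi-RWC}\ref{pro:Equi-RWC:3} and bound its Bregman term in two different ways. With $z:=\lambda x_1+(1-\lambda)x_2$, it suffices to show
\[
E:=\lambda\D(x_1,z)+(1-\lambda)\D(x_2,z)\le \min\{\lambda,\theta_h^{-1}(1-\lambda)\}\,\D(x_1,x_2).
\]
Throughout I take $h$ to be differentiable on $\interior\dom(h)$, as implicit in the use of $\D$ and $\theta_h$. Since $S\subseteq\interior\dom(h)$ is convex, $x_1,x_2,z\in\interior\dom(h)$, so every Bregman distance below is finite. I will also take $\theta_h>0$; when $\theta_h=0$ the stated convention makes the second term of the minimum $+\infty$ (or at least uninformative), so the claim reduces to the first bound alone.

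First I compute $E$ explicitly. Expanding the definition of $\D$, the linear terms combine into $\innprod{\nabla h(z)}{\lambda(x_1-z)+(1-\lambda)(x_2-z)}$. This vanishes because $\lambda(x_1-z)+(1-\lambda)(x_2-z)=0$. Hence
\[
E=\lambda h(x_1)+(1-\lambda)h(x_2)-h(z),
\]
which is just the convexity gap of $h$ along the segment.

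Next I rewrite this gap as a three-point identity anchored at $x_2$. Using $z-x_2=\lambda(x_1-x_2)$, a direct expansion gives
\[
E=\lambda\D(x_1,x_2)-\D(z,x_2).
\]
Since $h$ is convex, $\D(z,x_2)\ge0$, and therefore $E\le\lambda\D(x_1,x_2)$.

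Then I anchor at $x_1$ instead. Using $z-x_1=(1-\lambda)(x_2-x_1)$, the same expansion gives
\[
E=(1-\lambda)\D(x_2,x_1)-\D(z,x_1)\le(1-\lambda)\D(x_2,x_1).
\]
The symmetry inequality \cref{eq:symmetery} gives $\D(x_2,x_1)\le\theta_h^{-1}\D(x_1,x_2)$, so $E\le\theta_h^{-1}(1-\lambda)\D(x_1,x_2)$.

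Taking the minimum of the two bounds and substituting into \cref{eq:weakconvexity} yields the corollary. The only delicate point is the three-point algebra, that is, checking that the terms in $\nabla h(x_2)$ (respectively $\nabla h(x_1)$) cancel correctly. This is a routine expansion, so I expect no real obstacle.
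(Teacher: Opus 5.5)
Your proof is correct and is essentially the paper's argument. The paper applies characterization (b) of \Cref{pro:Equi-RWC} with the anchor $y=x_2$ and then $y=x_1$, which yields exactly your two identities $E=\lambda\D(x_1,x_2)-\D(z,x_2)$ and $E=(1-\lambda)\D(x_2,x_1)-\D(z,x_1)$; it then drops the nonnegative Bregman term and applies the symmetry bound \cref{eq:symmetery}, while your version spells out the three-point algebra and the $\theta_h=0$ convention that the paper leaves implicit.
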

\begin{proof}
    By \Cref{pro:Equi-RWC}, $\rho$-weak convexity of $\varphi$ relative to $h$ on $S$ implies that the function \(x\mapsto \varphi(x)+\rho\D(x,y)\) is convex on $S$ for any $y\in \interior \dom(h)$, i.e., for any $x_1,x_2 \in S$, any $\lambda\in[0,1]$, and any $y\in \interior \dom(h)$, one has
    \begin{equation*}
            \varphi(\lambda x_1+(1-\lambda)x_2) \leq \lambda \varphi(x_1) + (1-\lambda) \varphi(x_2)
            + \rho\Big(\lambda\D(x_1,y) +(1-\lambda)\D(x_2,y) -
            \D\big(\lambda x_1+(1-\lambda)x_2,y\big)\Big).
    \end{equation*}
    Let us consider the above inequality with two specific choices of $y$: first, $y=x_1$; second, $y=x_2$. Moreover, it is evident that
    \[
    \varphi(\lambda x_1+(1-\lambda)x_2) \leq \lambda \varphi(x_1) + (1-\lambda) \varphi(x_2)
            + \rho(1-\lambda)\D(x_2,x_1),
    \]
    \[
    \varphi(\lambda x_1+(1-\lambda)x_2) \leq \lambda \varphi(x_1) + (1-\lambda) \varphi(x_2)
            + \rho\lambda\D(x_1,x_2).
    \]
    Invoking the Bregman asymmetry bound $\D(x_2,x_1)\le \theta_h^{-1} \D(x_1,x_2)$, our desired inequality holds.
\end{proof}

We next investigate some calculus around the class of relatively weakly convex functions. Since the proofs of these results are straightforward consequences of \Cref{def:relSmooth}, we omit the details of the proofs.

\begin{prop}[{\bf Relative weak convexity calculus}]
\label{pro:relWeakConvCal}
    The following statements hold:
    \begin{enumerateq}
        \item
            Let $\func{h}{\R^n}{\Rinf}$ be a kernel function, let $S\subseteq \interior\dom(h)$ be a nonempty convex set, and let $\rho>0$.
            Let the function $\func{g}{\R^n}{\R}$ be $\rho$-weakly convex relative to $h$ on $S$.
            \begin{enumerateq}[label=$\bullet$]
                \item[(i)]
                    If $\func{\varphi}{\R^n}{\R}$ is a convex function on $S$, then $g+\varphi$ is $\rho$-weakly convex relative to $h$ on $S$;
                \item[(ii)]
                    Given $\varrho \in [\rho, \infty)$, the function $g$ is $\varrho$-weakly convex relative to $h$ on $S$;
                \item[(iii)]
                    Given $\varrho>0$, the function $\varrho g$ is $\varrho\rho$-weakly convex relative to $h$ on $S$.
            \end{enumerateq}
        \item 
            Let $\mathcal{I}$ be a finite index set,
            let $\func{h_i}{\R^n}{\Rinf},~i\in \mathcal{I},$ be a kernel function, let $S_{\mathcal{I}}\subseteq \cap_{i\in \mathcal{I}}\interior\dom(h_i)$ be a nonempty convex set, and $\rho_i>0,~i\in\mathcal{I}$.
            Let $\func{g_i}{\R^n}{\R}$, $i\in\mathcal{I}$, be $\rho_{i}$-weakly convex relative to $h_i$ on $S_{\mathcal{I}}$. 
        \begin{enumerateq}[label=$\bullet$]
        \item[(i)]
            $\sum_{i} g_i$ is $1$-weakly convex relative to $\sum_{i} \rho_ih_i$ on $S_{\mathcal{I}}$;
        \item[(ii)]
            If $h_{\mathcal{I}}=h_{i}$, $i\in \mathcal{I},$ then the function $ g_{\mathcal{I}}(x)=\max_{i} g_i(x)$ is $\rho_{\mathcal{I}}$-weakly convex relative to $h_{\mathcal{I}}$ on $S_{\mathcal{I}}$ at which $\rho_{\mathcal{I}}=\max_{i} \rho_i$.
    \end{enumerateq}
    \end{enumerateq}
\end{prop}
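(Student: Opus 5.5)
Every item reduces to the fact that convexity on $S$ is preserved under nonnegative combinations and pointwise maxima of finitely many functions. The plan is therefore to write each claimed function as $g+\rho h$ (or the appropriate analogue) and exhibit it as such an operation on functions already known to be convex on $S$ by \Cref{def:relSmooth}.

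For part 1, the hypothesis is that $g+\rho h$ is convex on $S$.
\begin{itemize}
\item[(i)] Write $(g+\varphi)+\rho h=(g+\rho h)+\varphi$. This is a sum of two convex functions on $S$, so it is convex.
\item[(ii)] Write $g+\varrho h=(g+\rho h)+(\varrho-\rho)h$. Since $\varrho-\rho\ge 0$ and $h$ is convex, this is a nonnegative combination of convex functions.
\item[(iii)] Use $\varrho g+\varrho\rho h=\varrho(g+\rho h)$. A positive multiple of a convex function is convex.
\end{itemize}
All functions involved are finite on $S\subseteq\interior\dom(h)$, so no $\infty-\infty$ issues arise.

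For part 2(i), each $g_i+\rho_i h_i$ is convex on $S_{\mathcal I}$. Their finite sum equals $\sum_i g_i+1\cdot\sum_i\rho_i h_i$, and $\sum_i\rho_i h_i$ is again convex and finite on $S_{\mathcal I}$, hence an admissible reference function. By the definition with $\rho=1$, this gives the claim.

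For part 2(ii), let $h:=h_{\mathcal I}$ be the common kernel. By part 1(ii) applied to each $g_i$, every $g_i+\rho_{\mathcal I}h$ is convex on $S_{\mathcal I}$, because $\rho_{\mathcal I}\ge\rho_i$. Then
\[
\max_i g_i+\rho_{\mathcal I}h=\max_i\bigl(g_i+\rho_{\mathcal I}h\bigr),
\]
since adding the same finite function commutes with the maximum. A pointwise maximum of finitely many convex functions is convex, which yields the claim.

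The only step needing any care is 2(ii). There one must first raise all constants to the common value $\rho_{\mathcal I}$ via 1(ii), and only then interchange the max with the additive term $\rho_{\mathcal I}h$; the common kernel assumption is exactly what makes this interchange possible. Everything else is immediate from the closure properties of convex functions.
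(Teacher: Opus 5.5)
Your proposal is correct and is the same approach the paper intends: the paper omits the proof as a straightforward consequence of \Cref{def:relSmooth}, and your argument is exactly that direct reduction. Each item becomes a nonnegative combination or a finite pointwise maximum of functions already convex on $S$, and in 2(ii) you correctly use 1(ii) to raise every constant to $\rho_{\mathcal I}$ before interchanging the maximum with the common term $\rho_{\mathcal I}h$.
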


Local Lipschitz continuity and subdifferential regularity (see \cite[Definition 2.3.4]{clarke1990optimization}) are fundamental properties in nonsmooth and variational analysis. As in the convex setting, we show that relatively weakly convex functions also possess these properties. Consequently, the Fr\'{e}chet subdifferential is nonempty and coincides with the other standard subdifferentials in variational analysis \cite{rockafellar2011variational,clarke1990optimization,mordukhovich2006variational}.

\begin{thm}[{\bf Locally Lipschitz property and Subdifferential regularity}]\label{thm:LocLip-RWC}
    Let $\func{h}{\R^n}{\Rinf}$ be a kernel function, let $S\subseteq \interior\dom(h)$ be a nonempty convex set, and let $\rho>0$. If \(\func \varphi{\R^n}{\Rinf}\) is \(\rho\)-weakly convex relative to \(h\) on $S$, then it is locally Lipschitz continuous and subdifferentially regular on $\interior S$.
\end{thm}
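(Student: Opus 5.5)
The plan is to write $\varphi = g - \rho h$ on $S$, where $g := \varphi + \rho h$ is convex on $S$ by \Cref{def:relSmooth}. Each property will then be transferred from the convex function $g$ and the $\C^1$ function $h$ to their difference. Throughout, I assume $\varphi$ is finite on $S$ (or at least on $\interior S$), which is implicit in the statement. If $\varphi$ took the value $+\infty$ at some point of $\interior S$, a convex $g$ that is finite somewhere would be $+\infty$ on an open set, and local Lipschitz continuity could not hold anyway.

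\textbf{Step 1 (local Lipschitz continuity).} Since $g$ is convex and finite on the convex set $S$, it is convex and finite on the open convex set $\interior S$. By the classical result that a finite convex function is locally Lipschitz on the interior of its domain (e.g.\ \cite{rockafellar2011variational,clarke1990optimization}), $g$ is locally Lipschitz on $\interior S$. For the other summand, note that $S\subseteq\interior\dom(h)$ and $h\in\C^1(\interior\dom h)$ by \Cref{def:kernel}. Hence $\nabla h$ is continuous, and therefore bounded on a compact ball $\cl\mathbb{B}(x;r)\subseteq\interior S$ around any $x\in\interior S$. The mean value theorem then shows that $h$ is Lipschitz on that ball. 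It follows that $\varphi = g-\rho h$ is Lipschitz on $\mathbb{B}(x;r)$.

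\textbf{Step 2 (subdifferential regularity).} Recall that regularity in the sense of \cite[Definition 2.3.4]{clarke1990optimization} means the one-sided directional derivative $\varphi'(x;d)$ exists and equals the Clarke derivative $\varphi^\circ(x;d)$. Convex functions that are locally Lipschitz are regular \cite[Prop.~2.3.6]{clarke1990optimization}. So are $\C^1$ functions, and hence so is $-\rho h$, since it is strictly differentiable on $\interior\dom h$. The sum rule \cite[Prop.~2.3.3 and Cor.~3]{clarke1990optimization} says that a sum of regular locally Lipschitz functions is regular. Applying it to $\varphi = g + (-\rho h)$ gives regularity of $\varphi$ on $\interior S$.

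The only point needing care is that regularity is normally stated for functions defined on an open set. Since everything is local at points of $\interior S$, this causes no issue. Alternatively, one can argue directly. We have $\varphi'(x;d) = g'(x;d) - \rho\innprod{\nabla h(x)}{d}$. Strict differentiability of $h$ lets its term pass through the $\limsup$ defining $\varphi^\circ$, so $\varphi^\circ(x;d) = g^\circ(x;d) - \rho\innprod{\nabla h(x)}{d}$. Finally, $g^\circ = g'$ by convexity, which gives $\varphi^\circ(x;d)=\varphi'(x;d)$.

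I expect no real obstacle. The main subtlety is the strict-differentiability step for $h$: mere differentiability would not suffice, but $\C^1$ gives it.
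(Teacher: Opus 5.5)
Your proposal is correct and takes the paper's route: the paper also writes $\varphi=(\varphi+\rho h)+(-\rho h)$ and applies Clarke's Proposition 2.3.6 (a convex function and a $\C^1$ function are each locally Lipschitz and regular, and their sum inherits both properties). Your remarks that $\varphi$ must be finite on $\interior S$, and that $\C^1$ supplies the strict differentiability of $h$ needed for regularity, make explicit what the paper leaves implicit.
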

\begin{proof}
    Suppose that $\varphi$ is a \(\rho\)-weakly convex function relative to the kernel \(h\) on $S$. Then, the function $\varphi+\rho h$ and $\rho h$ is convex on $S$. Furthermore, the function \(-\rho h\) is continuously differentiable on $S$. Thus, they both are locally Lipschitz continuous and subdifferentially regular (\cite[Proposition 2.3.6 (a) and (b)]{clarke1990optimization}) on $\interior S$. As a result, their addition, $\varphi=(\varphi+\rho h)+(-\rho h)$, is locally Lipschitz continuous and subdifferentially regular (\cite[Proposition 2.3.6 (c)]{clarke1990optimization}) on $\interior S$.
\end{proof}

In what follows, we present some characterizations of the relative weak convexity that are straightforward consequences of the definition.

\begin{prop}[\textbf{Characterizations II of relative weak convexity}]\label{pro:chara-RWC}%
    Let $\func{h}{\R^n}{\Rinf}$ be a kernel function, let $S\subseteq \interior\dom(h)$ be a nonempty convex and open set, and let $\rho>0$. Then, for a proper function \(\func \varphi{\R^n}{\Rinf}\), the following assertions are equivalent,
	\begin{enumerateq}[label=(\alph*), ref=(\alph*)]
	\item\label{pro:chara-RWC:1}
		\(\varphi\) is \(\rho\)-weakly convex relative to \(h\) on $S$;
	\item\label{pro:chara-RWC:2}%
		\(\varphi(y)\geq \varphi(x)+\innprod{\zeta}{y-x} - \rho\D_h(y,x)
		\)
		for any \(x,y\in S\) and any $\zeta \in \partial\varphi(x)$;
	\item\label{pro:chara-RWC:3}
		\(\innprod{\zeta-\eta}{x-y} \ge - \rho\innprod{\nabla h(x)-\nabla h(y)}{x-y}\)
		for any \(x,y\in S\), any $\zeta \in \partial\varphi(x)$, and any $\eta \in \partial\varphi(y)$;
	\item\label{pro:chara-RWC:4}
		\(\nabla^2\varphi(x) \succeq -\rho\nabla^2h(x)\),
		for any \(x\in S\), provided that \(\varphi,h\in\mathcal C^2(S)\).
	\end{enumerateq}
\end{prop}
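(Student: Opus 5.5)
The idea is to reduce everything to the convex function $\psi:=\varphi+\rho h$ on the open convex set $S$ and apply the classical characterizations of convexity: the subgradient inequality, monotonicity of the subdifferential, and a positive semidefinite Hessian. The bridge is a sum rule. By \Cref{thm:LocLip-RWC}, when (a) holds $\varphi$ is locally Lipschitz and regular on $S$, and since $h\in\C^1(\interior\dom h)$, the Fréchet sum rule with a smooth summand gives
\[
\partial\psi(x)=\partial\varphi(x)+\rho\nabla h(x)\qquad\text{for all } x\in S .
\]
This rule is exact for Fréchet subdifferentials whenever one summand is $C^1$, so it also holds without assuming (a). This is what lets us translate between $\varphi$ and $\psi$ in both directions.

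\textbf{(a)$\Rightarrow$(b).} Take $\zeta\in\partial\varphi(x)$. Then $\zeta+\rho\nabla h(x)\in\partial\psi(x)$. Since $\psi$ is convex on the open set $S$, its Fréchet subdifferential coincides with the convex subdifferential, so $\psi(y)\ge\psi(x)+\langle\zeta+\rho\nabla h(x),y-x\rangle$. Rearranging, and noting that the $h$-terms combine into $-\rho\D(y,x)$, gives (b).

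\textbf{(b)$\Rightarrow$(c).} Write (b) at $(x,y)$ with $\zeta$ and at $(y,x)$ with $\eta$, and add the two inequalities. Using $\D(y,x)+\D(x,y)=\langle\nabla h(x)-\nabla h(y),x-y\rangle$ yields (c).

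\textbf{(c)$\Rightarrow$(a).} By the sum rule, (c) says exactly that $\partial\psi$ is monotone on $S$. I would then invoke the standard fact that a function on an open convex set with monotone Fréchet (limiting) subdifferential is convex, as in \cite[Theorem 12.17]{rockafellar2011variational}. This step needs $\psi$ to be lsc, and $\partial\varphi$ to be nonempty densely enough. That is the main obstacle. Since the proposition is stated for a proper $\varphi$, I would impose or assume local Lipschitz continuity, as the paper does when defining $\partial$. Alternatively I would argue directly: restrict to a segment $t\mapsto\psi(x+t(y-x))$, which is locally Lipschitz, hence absolutely continuous. Its derivative exists a.e. and is bounded through Fréchet subgradients, and monotonicity of this derivative gives convexity of the one-dimensional function.

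\textbf{(a)$\Leftrightarrow$(d).} Under $C^2$ smoothness, $\psi\in\C^2(S)$ with $\nabla^2\psi=\nabla^2\varphi+\rho\nabla^2h$. On an open convex set, a $C^2$ function is convex if and only if its Hessian is positive semidefinite, which is exactly (d).
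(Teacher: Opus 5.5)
Your reduction to the convex function $\psi=\varphi+\rho h$ through the exact Fr\'{e}chet sum rule $\partial\psi(x)=\partial\varphi(x)+\rho\nabla h(x)$ on $S$ is the argument the paper intends. The paper gives no proof, calling these characterizations straightforward consequences of the definition. Your steps (a)$\Rightarrow$(b)$\Rightarrow$(c) and (a)$\Leftrightarrow$(d) are correct as written.

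Your caution about (c)$\Rightarrow$(a) is well founded. For a merely proper $\varphi$, the implications (b)$\Rightarrow$(a) and (c)$\Rightarrow$(a) are actually false. Take $\varphi(0)=1$ and $\varphi\equiv 0$ elsewhere. Then $\partial\varphi(0)=\emptyset$ and $\partial\varphi(x)=\{0\}$ for $x\neq 0$, so (b) and (c) hold, yet $\varphi+\rho h$ is not convex near $0$. Hence the local Lipschitz continuity that the paper builds into its definition of $\partial$ (or at least lower semicontinuity) really must be assumed.

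Granting that assumption, your citation route goes through. You need a version of the monotonicity criterion valid on an open convex set rather than on all of $\mathbb{R}^n$, and you use that monotonicity of Fr\'{e}chet subgradients passes to limiting ones.

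Your ``direct'' segment alternative, however, has a real hole. Differentiability of $g(t)=\psi(x+t(y-x))$ at $t$ produces no Fr\'{e}chet subgradient of $\psi$ at $x+t(y-x)$. For $\psi(u)=-|u_2|$, the Fr\'{e}chet subdifferential is empty on the whole line $u_2=0$, although $g$ is smooth there. Moreover, a segment is Lebesgue-null, so Rademacher's theorem says nothing on it.

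A clean fix under local Lipschitz continuity runs as follows. At differentiability points, $\nabla\psi$ is a Fr\'{e}chet subgradient, so (c) makes these gradients monotone. Hence the Clarke subdifferential, being the convex hull of limits of gradients, is monotone on $S$. Now apply Lebourg's mean value theorem on $[z,x]$ and on $[y,z]$, with $z=\lambda x+(1-\lambda)y$. This monotonicity then gives $\psi(z)\le\lambda\psi(x)+(1-\lambda)\psi(y)$. Alternatively, use Fubini to pass to nearby parallel segments on which $\psi$ is differentiable almost everywhere, and then take a limit.
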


We next establish the relative weak convexity for the composition, which is a generalization of the result studied in \cite[Lemma~4.2]{drusvyatskiy2019efficiency}.
We first define relatively smoothness.

\begin{defin}[\textbf{Relatively Bregman Jacobian smoothness}]\label{def:rbjs}
    Let $\func{h}{\R^n}{\Rinf}$ be a kernel, and let $S\subseteq \interior\dom(h)$ be a nonempty convex and open set. The mapping $\func{F}{\R^n}{\R^m}$ is said to be $L_F$-Bregman Jacobian smooth relative to $h$ on $S$ with $L_F>0$ if
    \begin{equation}\label{eq:relSmoothMap}
    \|F(x)-F(y)-J F(y)^T(x-y)\|\leq L_F \D(x,y), \quad \forall x, y\in S.
\end{equation}
\end{defin}

The following proposition provides a second-order characterization of relatively Bregman Jacobian smoothness.

\begin{prop}
    Let $\func{h}{\R^n}{\Rinf}$ and $\func{F}{\R^n}{\R^m}$ be twice continuously differentiable, and let $S\subseteq \interior\dom(h)$ be a nonempty convex and open set. If 
    \begin{equation*}
        \left\|\nabla^2 F(x)[y, y]\right\|_F \le L_F \nabla^2 h(x)[y,y],\quad\quad \forall x, y\in S,
    \end{equation*}
    then, the mapping $F$ is $L_F$-Bregman Jacobian smooth relative to $h$ on $S$.
\end{prop}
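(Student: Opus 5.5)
Fix $x,y\in S$ and parametrize the segment joining them by $z_t:=y+t(x-y)$, $t\in[0,1]$; since $S$ is convex, $z_t\in S$ for every $t$. Taylor's theorem with integral remainder, applied twice, will express both the left- and right-hand sides of \cref{eq:relSmoothMap} as integrals of second-order terms along this segment. We will then compare the integrands using the hypothesis and conclude.

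For $F$, apply the fundamental theorem of calculus twice to $t\mapsto F(z_t)$. This gives
\[
F(x)-F(y)-JF(y)^T(x-y)=\int_0^1 (1-t)\,\nabla^2F(z_t)[x-y,x-y]\,dt .
\]
The same formula for $h$ gives
\[
\D(x,y)=h(x)-h(y)-\innprod{\nabla h(y)}{x-y}=\int_0^1(1-t)\,\nabla^2h(z_t)[x-y,x-y]\,dt .
\]
Taking norms and using $\|\int v\|\le\int\|v\|$ together with $\|\cdot\|\le\|\cdot\|_F$ (here the vector $\nabla^2F(z)[d,d]\in\R^m$ has Frobenius norm equal to its Euclidean norm, so this step costs nothing), we obtain
\[
\|F(x)-F(y)-JF(y)^T(x-y)\|\le\int_0^1(1-t)\,\big\|\nabla^2F(z_t)[x-y,x-y]\big\|_F\,dt .
\]
Now apply the hypothesis pointwise at the base point $z_t\in S$ with direction $d=x-y$. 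This bounds the integrand by $L_F(1-t)\nabla^2h(z_t)[x-y,x-y]$, and integrating yields exactly $L_F\D(x,y)$, which is the required inequality.

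The only delicate point is the quantifier in the hypothesis. As written, it is stated for $x,y\in S$, but we need to use it with direction $x-y$, which need not belong to $S$. I will read the condition as holding for every direction $y\in\R^n$, which is the natural meaning since the inequality is quadratic in $y$. If one insists on directions in $S$, the inequality is $2$-homogeneous in the direction, and $S$ is open. So I would write $x-y=s^{-1}(w-z)$, then expand the quadratic forms. Differences are awkward for the cross terms, though, so the cleaner route is the following. Pick $w\in S$ and a small $s>0$ with $w+s(x-y)\in S$. This is not itself a direction in $S$ either, so in the end I would simply state the interpretation that the bound holds for all directions $d\in\R^n$ and proceed.

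Everything else is routine. The $\mathcal C^2$ assumption justifies the Taylor formulas and the continuity of the integrands, and convexity of $S$ keeps each $z_t$ in the region where the hypothesis applies.
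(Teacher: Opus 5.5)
Your argument is correct and follows the paper's proof. Both use the integral-form Taylor remainder for $F$ and $h$ along the segment, move the norm inside the integral, and apply the hypothesis pointwise with direction $x-y$ to get $L_F\D(x,y)$.

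Your quantifier concern is real. The paper's proof silently uses the hypothesis for arbitrary directions too, and this reading is genuinely needed, because the literal version with directions restricted to $S$ is false. Take $n=2$, $m=1$, $F(z)=\tfrac12\|z\|^2$, $h(z)=\tfrac12(z_1+z_2)^2+\tfrac{\delta}{2}\|z\|^2$ with $0<\delta<1$, $S=(1,2)^2$ and $L_F=1$. The hypothesis holds for every direction in $S$, yet $x-y$ parallel to $(1,-1)$ gives $\epsilon^2\le\delta\epsilon^2$, which is false. So no homogeneity argument can close that gap: drop the detour and simply state the hypothesis for all $d\in\R^n$.
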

\begin{proof}
    For any $x, y \in S$, we apply the exact Taylor remainder formula in integral form:
    \begin{align*}
        \Big\|F(y) - F(x) - JF(x)[y-x]\Big\|_F 
        &= \left\| \int_0^1 (1-t) \nabla^2 F\big(x+t(y-x)\big)[y-x, y-x] dt \right\|_F \\
        &\le \int_0^1 (1-t) \Big\| \nabla^2 F\big(x+t(y-x)\big)[y-x, y-x] \Big\|_F dt \\
        &\le L_{F} \int_0^1 (1-t) \innprod{y-x}{\nabla^2 h\big(x+t(y-x)\big)[y-x]} dt \\
        &= L_{F} \D(y, x),
    \end{align*}
    verifying the result.
\end{proof}

The next proposition establishes the relative weak convexity for the composition function of the form $\psi\circ F$ at which $\func{\psi}{\R^m}{\Rinf}$ is a convex and Lipschitz continuous function and $\func{F}{\R^n}{\R^m}$ is a relatively Bregman Jacobian smooth mapping.

\begin{prop}[\textbf{Relative weak convexity of composition}]
\label{pro:composition-RWC}
    Let $\func{h}{\R^n}{\Rinf}$ be a kernel function and let $S\subseteq \interior\dom(h)$ be a nonempty convex and open set.
    Let $\func{\psi}{\R^m}{\Rinf}$ be a convex and Lipschitz continuous function with constant $L_\psi$, and let the mapping $\func{F}{\R^n}{\R^m}$ be $L_F$-Bregman Jacobian smooth relative to $h$ on $S$. Then, the composite function $\func{\varphi}{\R^n}{\Rinf}$ given by $\varphi:=\psi\circ F$ is $\rho$-weakly convex relative to $h$ on $S$ with $\rho=L_\psi L_F$. 
\end{prop}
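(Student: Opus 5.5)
The plan is to prove convexity of $\varphi+\rho h$ on $S$ by establishing the subgradient-type inequality of \Cref{pro:chara-RWC}\ref{pro:chara-RWC:2}. Since subdifferentials of $\varphi$ are not yet available a priori, I will instead build, at each base point, a convex minorant of $\varphi+\rho h$ that is exact at that point. The supremum of these minorants then recovers $\varphi+\rho h$ and is convex.

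Fix $y\in S$ and define the linearized model $\ell_y(x):=\psi\big(F(y)+JF(y)^T(x-y)\big)$. This is convex in $x$, since it is a convex function composed with an affine map. By Lipschitz continuity of $\psi$ and \cref{eq:relSmoothMap}, for every $x\in S$,
\[
|\varphi(x)-\ell_y(x)|\le L_\psi\|F(x)-F(y)-JF(y)^T(x-y)\|\le L_\psi L_F\,\D(x,y)=\rho\,\D(x,y).
\]
In particular $\varphi(x)\ge \ell_y(x)-\rho\D(x,y)$, with equality at $x=y$ because $\D(y,y)=0$. Adding $\rho h(x)$ gives
\[
\varphi(x)+\rho h(x)\ \ge\ \ell_y(x)+\rho h(y)+\rho\innprod{\nabla h(y)}{x-y}=:m_y(x),
\]
since $h(x)-\D(x,y)=h(y)+\innprod{\nabla h(y)}{x-y}$. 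Each $m_y$ is convex in $x$: it is the convex function $\ell_y$ plus an affine function. Moreover $m_y(y)=\varphi(y)+\rho h(y)$.

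It follows that $\varphi+\rho h=\sup_{y\in S} m_y$ on $S$. The inequality $\ge$ holds pointwise by the display above, and the supremum is attained at $y=x$. A pointwise supremum of convex functions is convex, so $\varphi+\rho h$ is convex on $S$. This is exactly \Cref{def:relSmooth} with $\rho=L_\psi L_F$.

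The main point requiring care is ensuring the minorant is convex. The natural route via the first-order characterization would need $\partial\varphi$ and a chain rule, which is delicate for nonsmooth $\psi$. The supremum-of-convex-models trick avoids this entirely. The only checks are that $F$ is differentiable on $S$ (implicit in \Cref{def:rbjs}), that $\psi$ is finite-valued (it is Lipschitz), and that $h$ is differentiable on $S\subseteq\interior\dom(h)$ (it is a kernel).
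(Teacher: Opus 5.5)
Your proof is correct, and it takes a different route from the paper's.

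The paper fixes $x,y\in S$ and $\zeta\in\partial\varphi(x)$. It invokes Clarke's chain rule to write $\zeta=JF(x)^T\eta$ with $\eta\in\partial\psi(F(x))$. It then combines the subgradient inequality of $\psi$, the bound $\|\eta\|\le L_\psi$, and the Bregman Jacobian estimate to get the first-order inequality of Proposition~\ref{pro:chara-RWC}(b), and concludes via the equivalence (b)$\Leftrightarrow$(a) there.

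You instead verify Definition~\ref{def:relSmooth} directly. The two-sided model estimate $|\varphi-\ell_y|\le\rho\,\D(\cdot,y)$ gives a convex minorant $m_y$ of $\varphi+\rho h$ that is exact at $y$, and convexity follows by taking the supremum. In effect you run the standard proof of (b)$\Rightarrow$(a) with convex model minorants in place of affine ones.

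That substitution is what frees you from subdifferential calculus:
\begin{itemize}
\item no chain rule, hence no strict differentiability of $F$ and no Fr\'echet-versus-Clarke bookkeeping;
\item no openness of $S$;
\item not even the fact that $JF(y)$ is a Jacobian, only that some matrix satisfies the bound of Definition~\ref{def:rbjs}.
\end{itemize}

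It also avoids the implication (b)$\Rightarrow$(a), which the paper states without proof. For nonsmooth $\varphi$ that implication needs $\partial\varphi(x)\neq\emptyset$ throughout $S$. This can be obtained, e.g., from Clarke regularity of $\psi\circ F$, but not from Theorem~\ref{thm:LocLip-RWC} without circularity.

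As a side benefit, your Bregman term is automatically oriented correctly. The paper's last display writes $\D(x,y)$, whereas its estimate with base point $x$ actually yields $\D(y,x)$, which is the quantity required in (b).

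What the paper's route delivers directly is the subgradient inequality (b), the form used in the PSGA analysis. That inequality also follows from your conclusion by the easy direction (a)$\Rightarrow$(b).
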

\begin{proof}
    Let us fix points $x,y\in S$ and a subgradient $\zeta\in\partial \varphi(x)$, i.e., by the chain rule theorem, \cite[Theorem 2.3.10]{clarke1990optimization}, there exists $\eta\in\partial \psi(F(x))$ such that $\zeta=J F(x)^T\eta$. It follows from \cref{eq:relSmoothMap} and the convexity of $\varphi$ that
    $$
    \begin{array}{ll}
         \varphi(y)-\varphi(x) -\langle \zeta , y-x\rangle &= \psi(F(y)) -\psi(F(x)) - \langle JF(x)^T \eta , y-x\rangle \geq \langle \eta ,F(y)-F(x)\rangle - \langle \eta , JF(x)(y-x)\rangle\\[2mm]
         &\geq -\Vert \eta \Vert \,\Vert F(y) - F(x) - JF(x)(y-x)\Vert\geq -L_{\psi}L_{F} \D(x,y),
    \end{array}
    $$
    ensuring the relative weak convexity of $\varphi$ based on \Cref{pro:chara-RWC}~\ref{pro:chara-RWC:2}.
\end{proof}

\subsection{{\bf Optimization landscape of relatively weakly convex functions}}

The optimization landscape of relatively weakly convex functions exhibits a nuanced geometry, featuring local and global minima, maxima, and saddle points. Unlike convex functions, where any local minimum is global, relatively weakly convex functions may admit multiple local minima, complicating the identification of global optima. In addition, saddle points can significantly influence the dynamics of iterative algorithms, often slowing convergence or altering descent trajectories. Consequently, characterizing these critical points and the surrounding landscape is essential for developing effective algorithms in this setting.

In the following toy examples, we illustrate that such problems may exhibit saddle points or local maxima, while the global minimizers lie within a wide basin, supporting the intuition behind local convergence from favorable initializations.

\begin{exa}\label{exa:example_noniso_min}
    Let us consider the function $f:\mathbb{R}^2\to\mathbb{R}$ defined by
    \[
    f(x)=
    \begin{cases}
    4-4\|x\|,\hspace*{1cm} & 0\le \|x\|\le 1,\\[1mm]
    4\|x\|-4, & 1\le \|x\|\le 1.5,\\[1mm]
    2, & 1.5\le \|x\|\le 2,\\[1mm]
    4\|x\|-6, & \|x\|\ge 2,
    \end{cases}
    \]
    for $x\in \R^2$; see
    \Cref{fig:rwc_example}.
    Observe that the function is constantly equal to $2$ on the set $\{x\in \R^2 : 1.5\le \|x\|\le 2\}$. The origin $x=(0,0)$ and every point satisfying
    \(
    1.5\le \|x\| <2
    \) are local maximizers, whereas every point satisfying
    \(1.5<\|x\|\le 2
    \)
    is a local minimizer.
    Moreover, every point satisfying
    \(
    \|x\|=1
    \)
    is a global minimizer, showing that the set of global minimizers is nonisolated. In particular, the function is nonconvex. On the other hand, the function admits the representation
    \[
    f(x)
    =
    4-4\|x\|
    +4(\|x\|-2)_+
    -4(\|x\|-1.5)_+
    +8(\|x\|-1)_+,
    \]
    where
    \(
    (t)_+:=\max\{t,0\},
    \)
    following that $f$ is $4$-weakly convex relative to the convex function 
    \[
    h(x):=\|x\|+(\|x\|-1.5)_+.
    \]

    \begin{figure}[htp]
        \centering
        \subfloat{\includegraphics[width=7.5cm]{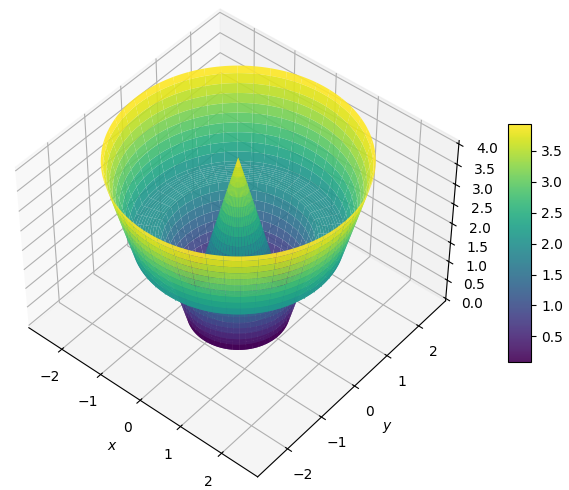}}%
        \qquad
        \subfloat{\includegraphics[width=7cm]{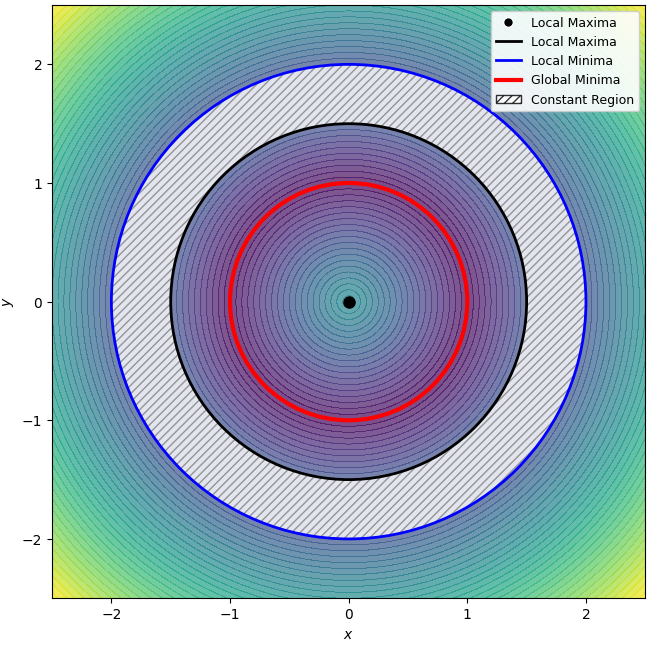}}%
        \caption{The annulus
        \(
        \{x\in\mathbb{R}^2:1.5\le\|x\|\le2\}
        \)
        (white region) is a flat stationary region where the function is constant, the origin
        \(
        (0,0)
        \)
        (black point) and the circle
        \(
        \{x\in \mathbb{R}^2:\|x\|=1.5\}
        \)
        (black circle) are the set of local maximizers, the circle
        \(
        \{x\in \mathbb{R}^2:\|x\|=2\}
        \)
        (blue circle) is the set of local maximizers, and the circle
        \(
        \{x\in \mathbb{R}^2:\|x\|=1\}
        \)
        (red circle) is the set of global minimizers.}\label{fig:rwc_example}
    \end{figure}
    \qed
\end{exa}

For many instances of the problem \cref{problem}, the global minimizers are surrounded by a broad basin, as illustrated in \Cref{fig:rwc_example,fig:weakconvExa1}. Consequently, if a local optimization method is initialized within this basin, it is likely to converge rapidly to a global minimizer. This observation motivates the design of two-stage optimization frameworks, where (i) the outer stage employs a coarse method, such as a spectral technique, to identify an initial point sufficiently close to the global minimizer, and (ii) the inner stage utilizes a local refinement algorithm to ensure fast convergence.

More specifically, under certain sufficient conditions, spectral methods, see, e.g., \cite[Section VIII]{chi2019nonconvex}, can reliably produce a point in the attraction region of a global solution. Once such a point is found, a suitable optimization routine, such as a projected subgradient method (cf. \Cref{sec:subGradMethod}), can be applied to drive the iterates toward the global minimizer.

The next result characterizes a neighborhood of the solution set to problem \cref{problem} that is free of spurious stationary points. This region is guaranteed by the combination of relative weak convexity and a sharpness error bound, and it provides a theoretically sound initialization region. That is, any algorithm started within this neighborhood is assured to generate a sequence that converges to a global minimizer.

\begin{ass}\label{ass2}
    We assume that the Bregman distance $D_h$ admits the following local Bregman growth condition, i.e., there exist constants $\nu\in (0,1]$ and $G_h>0$ such that
        \begin{align}\label{ineq:BGC}
            \inf_{y\in\proj_{\X^*}(x)}\D(y,x) \le G_{h} \dist^{1+\nu}(x;\mathcal{X}^*), \quad\quad\forall x\in \X, \,\, \dist(x;\mathcal{X}^*)< \big(\tfrac{\mu}{\rho G_h}\big)^{\nicefrac{1}{\nu}}.
        \end{align}
\end{ass}

It follows from \Cref{fact:BD} that if the function \(h\) has \(\nu\)-H\"older continuous gradient around the minimizer set, then the above assumption is automatically valid.

Throughout, we assume that \Cref{ass:basic,ass2} are in effect.
For any $\beta \in (0,1]$, let us define the tube
\begin{equation}\label{eq:Tbeta2}
    \mathcal{T}_\beta :=\left\{x \in  \X: \dist(x;\mathcal{X}^*) < \Big(\tfrac{\beta\mu}{\rho G_{h}}\Big)^{\nicefrac{1}{\nu}} \right\},
\end{equation}
which contains no extraneous stationary points of the problem, due to \cref{pro:dis2}.

\begin{prop}\label{pro:dis2}
    Let $x\in \T_1$ be a stationary point of \cref{problem}. Then $x\in \mathcal{X}^*$. 
\end{prop}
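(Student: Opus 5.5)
Let $x\in\T_1$ be a stationary point, so $0\in\partial(f+\delta_{\X})(x)$. The plan is to argue by contradiction. We combine three facts: the relative weak convexity subgradient inequality of \Cref{pro:chara-RWC}~\ref{pro:chara-RWC:2}, the local Bregman growth \cref{ineq:BGC}, and the sharpness error bound of \Cref{ass:basic}(d). Together they should give $\mu\,\dist(x;\X^*)\le\rho G_h\dist^{1+\nu}(x;\X^*)$, which is incompatible with membership in $\T_1$ unless $\dist(x;\X^*)=0$.

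\textbf{Step 1 (subgradient inequality at the stationary point).} Since $\X$ is convex and $f$ is relatively weakly convex, the function $f+\delta_{\X}$ is $\rho$-weakly convex relative to $h$ on $\X$: adding the convex indicator preserves this by \Cref{pro:relWeakConvCal}. I will apply the inequality of \Cref{pro:chara-RWC}~\ref{pro:chara-RWC:2} to $f+\delta_{\X}$ with $\zeta=0$. For every $y\in\X$ this gives
\[
f(y)\ge f(x)-\rho\D(y,x).
\]
Strictly, \Cref{pro:chara-RWC} was stated on an open set. So I would instead derive the inequality directly. The function $\psi:=f+\rho\D(\cdot,x)+\delta_{\X}$ is convex by \Cref{pro:Equi-RWC}~\ref{pro:Equi-RWC:2}. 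Its subdifferential at $x$ contains $0+\rho(\nabla h(x)-\nabla h(x))=0$, using the sum rule with the $\C^1$ term $\rho\D(\cdot,x)$, whose gradient vanishes at $x$. Hence $x$ minimizes the convex function $\psi$, which is exactly the displayed inequality. This avoids any openness issue.

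\textbf{Step 2 (choose $y$ as a projection and apply growth).} Take $y\in\proj_{\X^*}(x)$. The set $\X^*$ is closed because $f$ is continuous there, so projections exist. If $\X^*$ is not contained in $\X$, this step has a gap; I will assume $\X^*\subseteq\X$ as implicit in \Cref{ass:basic}(e) and \Cref{Rsharpbound}, where the minimizers are taken over the feasible set. Then $f(y)=f^*$. Taking the infimum over such $y$ and using \cref{ineq:BGC}, which applies since $x\in\T_1$, we get
\[
f(x)-f^*\le \rho\inf_{y\in\proj_{\X^*}(x)}\D(y,x)\le \rho G_h\dist^{1+\nu}(x;\X^*).
\]

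\textbf{Step 3 (combine with sharpness).} The sharpness bound gives $\mu\,\dist(x;\X^*)\le f(x)-f^*$. Suppose $d:=\dist(x;\X^*)>0$. Dividing by $d$ yields $\mu\le\rho G_h d^{\nu}$, that is, $d\ge(\mu/(\rho G_h))^{1/\nu}$. This contradicts $x\in\T_1$. Hence $d=0$, and closedness of $\X^*$ gives $x\in\X^*$.

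The main obstacle is Step 1. It requires justifying the relative subgradient inequality at a point that may lie on the boundary of $\X$, with the normal cone absorbed into the stationarity condition. The convexity-of-$\psi$ route described there is the one I would use. A secondary point is checking that the sum rule holds: the convex function $f+\rho h+\delta_{\X}$ plus a smooth term, with Fréchet subdifferentials, is fine.
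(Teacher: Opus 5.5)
Your proof is correct and follows essentially the same route as the paper. You combine the relative subgradient inequality at the stationary point with $\zeta=0$, evaluation at a projection of $x$ onto $\mathcal{X}^*$, the local Bregman growth condition, and sharpness to get $\mu\operatorname{dist}(x;\mathcal{X}^*)\le\rho G_h\operatorname{dist}^{1+\nu}(x;\mathcal{X}^*)$, contradicting $x\in\mathcal{T}_1$. Your Step~1 (convexity of $f+\rho D_h(\cdot,x)+\delta_X$ plus the smooth sum rule) carefully justifies what the paper gets by citing Proposition~\ref{pro:chara-RWC} directly, and your explicit use of $\mathcal{X}^*\subseteq X$ and closedness of $\mathcal{X}^*$ spells out assumptions the paper leaves implicit.
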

\begin{proof}
    By indirect proof, assume that there exists $x\in \T_1\setminus \mathcal{X}^*$ which is a stationary point of \cref{problem}, i.e., $0\in \partial(f+\delta_X)(x)$. Then, from $\rho$-weak convexity of $f$ relative to $h$, \cref{pro:chara-RWC} yields that
    $$f(y) - f(x) \geq -\rho \D(y,x), \quad \forall y\in X.$$
    Setting $x^* \in \proj_{\mathcal{X}^*}{(x)}$ such that $\D(x^*, x) = \inf_{y\in \proj_{\X^*}(x)} \D(y,x)$ and applying sharpness error bound together with local Bregman growth condition \cref{ineq:BGC}, we come to
    \begin{equation*}
             \mu \dist(x;\mathcal{X}^*)\leq f(x) - f(x^*) \leq \rho \D(x^*,x) \le \rho G_{h} \dist^{1+\nu}(x;\mathcal{X}^*),
    \end{equation*}
    making a contradiction with $x\in \T_1$. 
\end{proof}

Additionally, we set
$$L := \sup \bigg\{\Vert \zeta\Vert : \zeta\in \partial f(x),~ x \in \mathcal{T}_{1}\bigg\}.$$
The following lemma provides a key relationship between $\mu$ and $L$. Let us define $\tau:=\tfrac{\mu}{L}$.

\begin{lem}\label{lem:tau}
    It holds that $\tau=\tfrac{\mu}{L} \in (0,1]$.
\end{lem}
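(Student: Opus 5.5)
We need $\tau>0$, which requires $L<\infty$ and $\mu>0$. We also need $\mu\le L$; since $\mu>0$ is given, this also yields $L>0$.

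For $\mu\le L$, pick a point $x\in\mathcal{T}_1\setminus\mathcal{X}^*$. Such a point exists whenever $X\not\subseteq\mathcal{X}^*$. (If $X=\mathcal{X}^*$ the problem is trivial; the paper implicitly excludes this case, and I would note it.) Let $x^*\in\proj_{\mathcal{X}^*}(x)$. The segment from $x^*$ to $x$ lies in $\mathcal{T}_1$, since every point on it satisfies $\dist(\cdot;\mathcal{X}^*)\le\|\cdot-x^*\|\le\|x-x^*\|$, and it lies in $X$ by convexity.

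By \Cref{thm:LocLip-RWC}, $f$ is locally Lipschitz and subdifferentially regular there. Lebourg's mean value theorem then gives a point $z$ on the segment and $\zeta\in\partial f(z)$ with
\[
f(x)-f(x^*)=\innprod{\zeta}{x-x^*}\le L\|x-x^*\| = L\dist(x;\mathcal{X}^*).
\]
Regularity is exactly what lets the Clarke subgradient produced by the theorem be treated as a Fréchet subgradient. Combining this with sharpness, $\mu\dist(x;\mathcal{X}^*)\le f(x)-f^*$, and dividing by $\dist(x;\mathcal{X}^*)>0$ yields $\mu\le L$. Hence $\tau\le1$.

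The main obstacle is $L<\infty$. This is implicit in the definition of $L$ as a finite constant, and the paper presumably takes it as a standing assumption. If it must be argued, I would use local Lipschitz continuity from \Cref{thm:LocLip-RWC} together with boundedness of $\mathcal{T}_1$, e.g.\ when $\mathcal{X}^*$ is bounded, so that compactness of its closure inside $\interior\dom h$ bounds the subgradients. Otherwise I would state finiteness of $L$ explicitly as an assumption. Granting $L<\infty$, we get $0<\mu\le L<\infty$, so $\tau\in(0,1]$.

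A minor technical point: if $x$ is on the boundary of $X$, the subdifferential should be read as that of $f$ on a neighborhood. This is fine since $X\subseteq\interior\dom(h)$, so $f$ is relatively weakly convex on an open convex neighborhood of the segment.
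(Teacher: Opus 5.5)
Your proposal is correct and follows essentially the same argument as the paper. Both take $x\in\mathcal{T}_1\setminus\mathcal{X}^*$ and its projection $x^*$, and apply a nonsmooth mean value theorem on the segment $[x^*,x]$; the paper cites Mordukhovich's version, and your Lebourg-plus-regularity route is equivalent here. Both then bound the subgradient by $L$ and compare with the sharpness bound to get $\mu\le L$. Your remarks that finiteness of $L$ and nonemptiness of $\mathcal{T}_1\setminus\mathcal{X}^*$ are taken for granted in the paper are accurate, and they are not gaps in your argument relative to the paper's proof.
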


\begin{proof}
    Let us consider
    $x\in \mathcal{T}_{1} \setminus \mathcal{X}^*$.
    For $x^* \in \proj_{\mathcal{X}^*} {(x)}$, invoking the mean value theorem \cite[Theorem 3.51]{mordukhovich2006variational}, there exist $z\in [x,\bar x)$ and $\zeta \in \partial f(z)$ satisfying
    $f(x) - f(x^*) \leq \langle \zeta , x- x^* \rangle.$
    Applying the sharpness error bound property, we come to
    $$\mu \dist(x;\mathcal{X}^*)\leq f(x) -f^* \leq \Vert\zeta\Vert ~\Vert x-x^*\Vert\leq L\dist(x;\mathcal{X}^*),$$
    confirming $\tau\in (0,1]$.
\end{proof}


\section{Projected subgradient algorithm}\label{sec:subGradMethod}

In this section, we present projected subgradient methods for the nonsmooth and constrained relatively weakly convex optimization problems of the form \cref{problem} and establish their convergence analysis for several choices of step-sizes.

\vspace{3mm}
\RestyleAlgo{boxruled}
\begin{algorithm}[H]
\DontPrintSemicolon
\KwIn{$x_0\in \mathcal{T}_\beta$,~ $\beta \in (0,1)$,~ $\alpha_0>0$;}
\Begin
{   
    \While{the stopping criterion does not hold}{
       Choose $\zeta_k \in \partial f(x_k)$;\;
       
       Set $x_{k+1}=\proj_X\left(x_k-\alpha_k\tfrac{\zeta_k}{\|\zeta_k\|}\right)$ and $k=k+1$;\;
       
        }
    Set $x_{best}=x_k$;\;
}
\KwOut{$x_{best}$.} 
\caption{PSGA (Projected Subgradient Algorithm)\label{alg:PSGA}}
\end{algorithm}

\vspace{3mm}

In \Cref{alg:PSGA}, the step-size $\alpha_{k}>0$ plays a key role in the algorithm's progression.
In fact, by employing different step-sizes we can define various projected subgradient methods, where they can then be compared in terms of the convergence rates and overall numerical performance. In this paper, we investigate several commonly step-size strategies for PSGA, namely:
\begin{enumerateq}
\item \emph{\textbf{Constant step-size}} (see \Cref{sec:PSGA-C}):
\[
\alpha_k=\alpha>0,\qquad \forall\,k.
\]

\item \emph{\textbf{Nonsummable diminishing step-size (ND)}} (see \Cref{sec:PSGA-DS}):
\[
\alpha_k\ge0,\qquad
\lim_{k\to\infty}\alpha_k=0,\qquad
\sum_{k=0}^{\infty}\alpha_k=\infty.
\]
A commonly used example is
\[
\alpha_k=\frac{\lambda}{(k+k_0)^r},
\qquad
\lambda,k_0>0,\;\;0<r<1.
\]
The classical harmonic step-size, corresponding to $r=1$, also belongs to this class.

\item \emph{\textbf{Square-summable yet nonsummable step-size (SSN)}} (see \Cref{sec:PSGA-DS}):
\[
\alpha_k\ge0,\qquad
\sum_{k=0}^{\infty}\alpha_k=\infty,\qquad
\sum_{k=0}^{\infty}\alpha_k^2<\infty.
\]
A standard example is
\[
\alpha_k=\frac{\lambda}{(k+k_0)^r},
\qquad
\lambda,k_0>0,\;\;\frac12<r\le1.
\]

\item \emph{\textbf{Geometrically decaying step-size (GD)}} (see \Cref{sec:PSGA-DS}):
\[
\alpha_k=\lambda q^k,
\qquad
\lambda>0,\;\;0<q<1,
\]
which is a classical geometrically decaying rule.
\end{enumerateq}

The convergence properties of PSGA under these step-size rules are investigated in the subsequent subsections.

We begin with the subsequent lemma establishing a fundamental recurrence, providing an upper bound for the update step based on the chosen step-size. This result is instrumental in deriving the convergence rate of the projected subgradient methods.

\begin{lem}[{\bf Basic inequalities I}]\label{lem:Basic1}
    Let the sequence \(\seq{x_k}\) be generated by PSGA, the following statements hold:
    \begin{enumerateq}[label=(\alph*), ref=(\alph*)]
        \item\label{lem:Basic1:1}
        If \(x_k \in \mathcal{T}_\beta\), one has
        \begin{align}
            \dist^2(x_{k+1};\mathcal{X}^*) \leq \dist^2(x_{k};\mathcal{X}^*)-\tfrac{2(1-\beta)\alpha_{k}}{\Vert \zeta_{k} \Vert} \left(f(x_{k})-f^{*}\right) +\alpha_k^2,\label{eq:lem-Basic01}
        \end{align}
        and the maximum decrease is given for $\alpha_k=(1-\beta)\tfrac{f(x_k)-f^*}{\|\zeta_k\|}$.
        Moreover,
        \begin{align}
            \dist^2(x_{k+1};\mathcal{X}^*) \leq \dist^2(x_{k};\mathcal{X}^*) -\tfrac{2(1-\beta)\mu\alpha_{k}}{\Vert \zeta_{k} \Vert} \dist(x_{k};\mathcal{X}^*) + \alpha_k^2;\label{eq:lem-Basic02}
        \end{align}
        \item\label{lem:Basic1:2}
        If the step-sizes satisfy
        \(0<\alpha_k \le \min\big\{1,2(1-\beta)\tau\big\} \Big(\tfrac{\beta\mu}{\rho G_{h}}\Big)^{\tfrac{1}{\nu}},\)
        then \(\seq{x_{k}}\subseteq \mathcal{T}_\beta\).
    \end{enumerateq}
\end{lem}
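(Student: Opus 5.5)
Let $x^*\in\proj_{\X^*}(x_k)$ be chosen to attain the infimum in \cref{ineq:BGC}, so that $\D(x^*,x_k)\le G_h\dist^{1+\nu}(x_k;\X^*)$. Since $x^*\in\X^*\subseteq X$, the projection $\proj_X$ is nonexpansive and $x^*=\proj_X(x^*)$. Hence
\[
\dist^2(x_{k+1};\X^*)\le \|x_{k+1}-x^*\|^2\le \Big\|x_k-\alpha_k\tfrac{\zeta_k}{\|\zeta_k\|}-x^*\Big\|^2 = \dist^2(x_k;\X^*)-\tfrac{2\alpha_k}{\|\zeta_k\|}\innprod{\zeta_k}{x_k-x^*}+\alpha_k^2 .
\]
To control the cross term I will use \Cref{pro:chara-RWC}\ref{pro:chara-RWC:2} with $y=x^*$, $x=x_k$. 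This gives $\innprod{\zeta_k}{x_k-x^*}\ge f(x_k)-f^*-\rho\D(x^*,x_k)$. Then \cref{ineq:BGC} bounds this below by $f(x_k)-f^*-\rho G_h\dist^{1+\nu}(x_k;\X^*)$. Because $x_k\in\T_\beta$, we have $\rho G_h\dist^{\nu}(x_k;\X^*)<\beta\mu$. The sharpness bound then yields
\[
\rho G_h\dist^{1+\nu}(x_k;\X^*)\le \beta\mu\dist(x_k;\X^*)\le\beta(f(x_k)-f^*),
\]
so $\innprod{\zeta_k}{x_k-x^*}\ge(1-\beta)(f(x_k)-f^*)$. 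Substituting this gives \cref{eq:lem-Basic01}. The bound is a quadratic in $\alpha_k$, and minimizing it gives $\alpha_k=(1-\beta)(f(x_k)-f^*)/\|\zeta_k\|$. Applying sharpness once more turns \cref{eq:lem-Basic01} into \cref{eq:lem-Basic02}. If $\zeta_k=0$, then $x_k$ is stationary and lies in $\X^*$ by \Cref{pro:dis2}, so that case is trivial.

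For \ref{lem:Basic1:2} I argue by induction; the base case is $x_0\in\T_\beta$. Write $d_k=\dist(x_k;\X^*)$ and $R=(\beta\mu/(\rho G_h))^{1/\nu}$. Since $\|\zeta_k\|\le L$ on $\T_1\supseteq\T_\beta$, \cref{eq:lem-Basic02} gives
\[
d_{k+1}^2\le d_k^2-2(1-\beta)\tau\alpha_k d_k+\alpha_k^2=:\phi(d_k).
\]
The function $\phi$ is a convex quadratic in $d_k$, so on $[0,R]$ its maximum is attained at an endpoint. At $d_k=0$ we get $\phi(0)=\alpha_k^2$, which is below $R^2$ provided $\alpha_k<R$; the hypothesis $\alpha_k\le R$ covers this, up to the boundary case. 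At $d_k=R$ we get $R^2-\alpha_k(2(1-\beta)\tau R-\alpha_k)$, which is at most $R^2$ because $\alpha_k\le 2(1-\beta)\tau R$. Since $d_k<R$ strictly, convexity of $\phi$ gives $\phi(d_k)<\max\{\phi(0),\phi(R)\}\le R^2$, so $x_{k+1}\in\T_\beta$ (note $x_{k+1}\in X$ automatically).

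The main obstacle is handling the strict inequality at the endpoints, in particular when $\alpha_k=R$ exactly and $d_k$ is near $0$. I will deal with it through the strict convexity argument above, where $d_k<R$ strictly. If the equality case $\alpha_k=R$ with $d_k=0$ still causes trouble, I will observe that $d_k=0$ means $x_k\in\X^*$, and I will treat that case separately or accept a closed tube.
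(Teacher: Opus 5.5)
Your proof is essentially the paper's proof. Part (a) follows the same chain: nonexpansiveness of $\proj_X$, then the subgradient inequality of \Cref{pro:chara-RWC} at the Bregman-minimizing projection $x^*$, then \cref{ineq:BGC} and sharpness. Because you keep $\rho G_h\dist^{1+\nu}(x_k;\mathcal{X}^*)\le\beta\mu\dist(x_k;\mathcal{X}^*)$ non-strict, you also cover the case $x_k\in\mathcal{X}^*$ with $\zeta_k\neq 0$. The paper's ``without loss of generality $x_k\notin\mathcal{X}^*$'' silently skips that case. Part (b) is the same induction via convexity of the quadratic bound in $d_k$. You check the endpoints $0$ and $R$ directly, whereas the paper splits at $t^*=\alpha_k\|\zeta_k\|/(2(1-\beta)\mu)$ (monotone decrease for $d_k\ge t^*$, endpoints $0,t^*$ otherwise). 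That difference is cosmetic.

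The one thing to correct is your strictness claim. Strict convexity gives $\phi(d_k)<\max\{\phi(0),\phi(R)\}$ only for $d_k\in(0,R)$, not for $d_k=0$. At $d_k=0$ with $\alpha_k=R$, which is admissible whenever $2(1-\beta)\tau\ge 1$, the open-tube conclusion is actually false, so this case cannot be ``treated separately''. For instance, take $f(x)=|x|$, $h(x)=\tfrac{1}{2}x^2$, $X=\R$, $\rho=1$, $\mu=1$ (hence $L=\tau=1$), $\nu=1$, $G_h=\tfrac{1}{2}$. With $\beta=\tfrac{1}{2}$ this gives $R=1$. From $x_k=0$ with $\zeta_k=1\in\partial f(0)$ and $\alpha_k=1$, the next iterate is $x_{k+1}=-1$, at distance exactly $R$ from $\mathcal{X}^*=\{0\}$.

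So take your other option. Either prove (b) for the closed tube $\{x\in X:\dist(x;\mathcal{X}^*)\le R\}$, or require $\alpha_k<R$. Part (a) and the bound $\|\zeta_k\|\le L$ still hold on the closed tube, since $\beta<1$ places it inside $\mathcal{T}_1$ and inside the region where \cref{ineq:BGC} is assumed. The paper's proof has the same defect: its Case (ii) only yields $\dist^2(x_{k+1};\mathcal{X}^*)\le R^2$ before concluding $x_{k+1}\in\mathcal{T}_\beta$.
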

\begin{proof}
    \ref{lem:Basic1:1} From \Cref{alg:PSGA}, if $\zeta_k=0$, the claims are evident then from $x_{k+1}=x_{k}\in \mathcal{X}^*$.
    Without loss of generality, we assume $\zeta_k\neq 0$ and $x_k\not\in \X^*$. Let us consider $x^* \in \proj_{\mathcal{X}^*}(x_{k})$ such that $\D(x^*, x_{k}) = \displaystyle\inf_{y\in \proj_{\X^*}(x_{k})} \D(y,x_{k})$.
    Using the nonexpansiveness of $\operatorname{proj}_{X}$ and applying $\rho$-weak convexity of $f$ relative to $h$, we obtain
    \begin{align}
        \dist^2(x_{k+1};\mathcal{X}^*) &\leq \Vert x_{k+1}- x^* \Vert^{2} \leq \Vert(x_{k}-x^*)-\alpha_{k} \tfrac{\zeta_{k}}{\Vert \zeta_{k} \Vert}\Vert^{2} = \Vert x_{k}-x^* \Vert^{2}+\tfrac{2 \alpha_{k}}{\Vert\zeta_{k}\Vert} \langle\zeta_{k}, x^* - x_{k}\rangle + \alpha_{k}^{2}\nonumber\\
        &\le \dist^2(x_{k};\mathcal{X}^*)+\tfrac{2 \alpha_{k}}{\Vert \zeta_{k} \Vert} \Big(f^{*}-f(x_{k})+ \rho \D(x^*,x_k)\Big) + \alpha_{k}^{2}.\label{eq:lem-Basic05}
    \end{align}
    Inasmuch as $x_{k}\in \mathcal{T}_{\beta}$, local Bregman growth condition \cref{ineq:BGC} together with sharpness error bound property yield that
    \begin{equation}\label{eq:lem-Basic03}
        \rho \D(x^*,x_{k})=\rho\displaystyle\inf_{y\in \proj_{\X^*}(x_{k})} \D(y,x_{k}) \le \rho G_{h} \dist^{1+\nu}(x_{k};\mathcal{X}^*) < \beta\mu \dist(x_{k};\mathcal{X}^*) \leq \beta(f(x_{k})-f^{*}).
    \end{equation}
    Substituting \cref{eq:lem-Basic03} into \cref{eq:lem-Basic05} verifies inequality \cref{eq:lem-Basic01}.
    Furthermore, the function
    $$\alpha\mapsto -\tfrac{2(1-\beta)\alpha}{\Vert \zeta_{k} \Vert} \left(f(x_{k})-f^{*}\right) +\alpha^2,$$
    is a convex function on $(0,+\infty)$ and attains its minimum at $\alpha_k=(1-\beta)\tfrac{f(x_k)-f^*}{\|\zeta_k\|}$ which leads to the maximum decrease in \cref{eq:lem-Basic01}.
    Moreover, the argument for \cref{eq:lem-Basic02} is derived from \cref{eq:lem-Basic01}, relying on a sharpness error bound.\\
    \ref{lem:Basic1:2} It follows from \Cref{alg:PSGA} that $x_0\in\T_\beta$. By induction, we assume that $x_{k}\in \T_\beta$ and prove that $x_{k+1}\in \T_\beta$. Let us consider two cases: $(i)$ $\dist(x_{k};\mathcal{X}^*)\ge   \tfrac{\alpha_k\Vert \zeta_{k} \Vert}{2(1-\beta)\mu}$; $(ii)$ $\dist(x_{k};\mathcal{X}^*)<   \tfrac{\alpha_k\Vert \zeta_{k} \Vert}{2(1-\beta)\mu}$.
    
    In Case~(i), the inequality \cref{eq:lem-Basic02} implies
    $$
    \dist^2(x_{k+1};\mathcal{X}^*) \leq \dist^2(x_{k};\mathcal{X}^*)
    < \Big(\tfrac{\beta\mu}{\rho G_{h}}\Big)^{\tfrac{2}{\nu}},
    $$
    i.e., $x_{k+1}\in \mathcal{T}_\beta$. 
    
    Let us consider Case~(ii).
    From the convexity of the function
    $t\mapsto t^2 - \tfrac{2(1-\beta)\mu\alpha_{k}}{\Vert \zeta_{k} \Vert} t   + \alpha_{k}^{2}$
    on $[0,\tfrac{\alpha_{k}\Vert \zeta_{k} \Vert}{2(1-\beta)\mu}]$
    achieving its the maximum at $t=0$ or $t=\tfrac{\alpha_{k}\Vert \zeta_{k} \Vert}{2(1-\beta)\mu}$, inequality \cref{eq:lem-Basic02} yields that
    \begin{align*}
        \dist^2(x_{k+1};\mathcal{X}^*) &\le \max \left\{ \alpha^{2}_{k},~
        \tfrac{\alpha_{k}^2\Vert \zeta_{k} \Vert^2}{4(1-\beta)^2\mu^2} \right \} \leq \alpha_k^2 \max \left\{ 1,~
        \tfrac{1}{4(1-\beta)^2\tau^2} \right\}\\
        &\le \max \left\{ 1,~
        \tfrac{1}{4(1-\beta)^2\tau^2} \right\}\min\Big\{1,4(1-\beta)^2\tau^2\Big\} \Big(\tfrac{\beta\mu}{\rho G_{h}}\Big)^{\tfrac{2}{\nu}}
        =\Big(\tfrac{\beta\mu}{\rho G_{h}}\Big)^{\tfrac{2}{\nu}},
    \end{align*}
    where the second and third inequalities are derived from $\|\zeta_k\|\le L$ and the bound on $\alpha_{k}$, respectively. Therefore, $x_{k+1}\in \mathcal{T}_\beta$ and the proof is completed.
\end{proof}

In the following lemma, we provide an upper bound for the function gap $f^*_k - f^*$ where $f_{k}^{*}:=\min\{f(x_{i}) : i=0,1,\ldots,k\}$.

\begin{lem}[{\bf Basic inequalities II}]\label{lem:Basic2}
    Let the sequence $\seq{x_{k}}\subseteq \mathcal{T}_\beta$ be generated by PSGA. Then,
    \begin{align}
        f(x_{k}) - f^{*} \leq \tfrac{L\dist^2(x_{k};\mathcal{X}^*)+ L \alpha_{k}^{2}}{2(1-\beta)\alpha_{k}},\quad\quad \forall k\ge 0,~\,\label{eq:lem:basic2:01}\\
        f_{k}^{*}- f^{*} \leq \tfrac{\big(f(x_{0}) - f^{*}\big)^2+ \mu^2 \sum_{i=0}^{k}\alpha_{i}^{2}}{2(1-\beta)\mu\tau\sum_{i=0}^{k}\alpha_{i}},\quad\quad \forall k\ge 0.\label{eq:lem:basic2:02}
    \end{align}
\end{lem}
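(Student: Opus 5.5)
Both estimates come from \cref{eq:lem-Basic01} in \Cref{lem:Basic1}\ref{lem:Basic1:1}, which holds at every iterate because $\seq{x_k}\subseteq\mathcal{T}_\beta$ by hypothesis. Throughout, use $\|\zeta_k\|\le L$, which follows from the definition of $L$ and $\mathcal{T}_\beta\subseteq\mathcal{T}_1$. The degenerate case $\zeta_k=0$ gives $x_k\in\mathcal{X}^*$ by \Cref{pro:dis2}, so both bounds hold trivially; assume $\zeta_k\neq0$.

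\textbf{First estimate \cref{eq:lem:basic2:01}.} Drop the nonnegative term $\dist^2(x_{k+1};\mathcal{X}^*)$ on the left of \cref{eq:lem-Basic01}. This gives $\tfrac{2(1-\beta)\alpha_k}{\|\zeta_k\|}(f(x_k)-f^*)\le \dist^2(x_k;\mathcal{X}^*)+\alpha_k^2$. Multiplying by $\|\zeta_k\|/(2(1-\beta)\alpha_k)$ and bounding $\|\zeta_k\|\le L$ yields the claim directly.

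\textbf{Second estimate \cref{eq:lem:basic2:02}.}
\begin{enumerate}
\item Replace $\|\zeta_i\|$ by $L$ in the decrease term of \cref{eq:lem-Basic01}. This is valid since that term is nonpositive. We get $\dist^2(x_{i+1};\mathcal{X}^*)\le \dist^2(x_i;\mathcal{X}^*)-\tfrac{2(1-\beta)\alpha_i}{L}(f(x_i)-f^*)+\alpha_i^2$ for every $i$.
\item Sum over $i=0,\dots,k$ and telescope. Discard $\dist^2(x_{k+1};\mathcal{X}^*)\ge0$ and bound each $f(x_i)-f^*\ge f_k^*-f^*$. This gives
\[
\tfrac{2(1-\beta)}{L}\big(f_k^*-f^*\big)\textstyle\sum_{i=0}^k\alpha_i\le \dist^2(x_0;\mathcal{X}^*)+\sum_{i=0}^k\alpha_i^2 .
\]
\item Apply the sharpness error bound at $x_0$: $\dist^2(x_0;\mathcal{X}^*)\le (f(x_0)-f^*)^2/\mu^2$.
\item Multiply through by $\mu^2$ and use $L=\mu/\tau$, so the left coefficient becomes $2(1-\beta)\mu\tau$. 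Dividing then gives exactly \cref{eq:lem:basic2:02}.
\end{enumerate}

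\textbf{Main obstacle.} The only delicate point is the direction of the inequality when substituting $L$ for $\|\zeta_i\|$. It is valid only because $f(x_i)-f^*\ge0$, so replacing the denominator by the larger $L$ weakens the decrease term. The remaining steps are telescoping and rearrangement.
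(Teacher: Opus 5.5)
Your proposal is correct and follows the paper's proof essentially step for step. For the first estimate you bound $\|\zeta_k\|\le L$ in \eqref{eq:lem-Basic01} and drop the nonnegative distance term at $x_{k+1}$. For the second you telescope, use $f(x_i)\ge f_k^*$, apply sharpness at $x_0$, and rewrite $\mu^2/L=\mu\tau$. Your explicit handling of the degenerate case $\zeta_k=0$ via Proposition~\ref{pro:dis2} is a small addition; the paper only treats that case inside Lemma~\ref{lem:Basic1}.
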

\begin{proof}
    Inasmuch as $\|\zeta_k\|\le L$, inequality \cref{eq:lem-Basic01} simplifies to
    \begin{align*}
            \tfrac{2(1-\beta)\alpha_{k}}{L} \left(f(x_{k})-f^{*}\right) \leq \dist^2(x_{k};\mathcal{X}^*)- \dist^2(x_{k+1};\mathcal{X}^*)+\alpha_k^2\le \dist^2(x_{k};\mathcal{X}^*)+\alpha_k^2,
        \end{align*}
    which directly yields \cref{eq:lem:basic2:01}.
    On the other hand,
    Taking the sum of both sides of the first above inequality from $i=0$ to $i=k$, we obtain
    \begin{equation*}
        \sum_{i=0}^{k}\tfrac{2(1-\beta)\alpha_{i}}{L} \big(f(x_{i}) - f^{*}\big)
        \leq \dist^2(x_{0};\mathcal{X}^*) - \dist^2(x_{k+1};\mathcal{X}^*) + \sum_{i=0}^{k} \alpha_{i}^{2},
    \end{equation*}
    leading to
    \begin{align}\label{eq:lem-Basic01L}
       \tfrac{2(1-\beta)\big(f^{*}_{k} - f^{*}\big)}{L}\sum_{i=0}^{k}\alpha_{i}\le \dist^2(x_{0};\mathcal{X}^*)+ \sum_{i=0}^{k}\alpha_{i}^{2}\le \tfrac{\big(f(x_0)-f^*\big)^2}{\mu^2}
        + \sum_{i=0}^{k}\alpha_{i}^{2},
    \end{align}
    ensuring \cref{eq:lem:basic2:02}.
\end{proof}

\begin{lem}[\textbf{Convergence analysis}]\label{lem:ConvAna}
    Let the sequence $\seq{x_{k}}$ be generated by PSGA. If $\displaystyle\lim_{k\to\infty}\dist(x_k;\X^*)=0$, the following statements hold:
    \begin{enumerateq}[label=(\alph*), ref=(\alph*)]
        \item\label{lem:ConvAna:1}
            $f(x_k) - f^* \leq L\dist(x_k;\mathcal{X}^*)$ for all $x_k\in\T_1$, and $\displaystyle\lim_{k\to\infty}f(x_k)=f^*$;
        \item\label{lem:ConvAna:2}
            All cluster points of the sequence $\seq{x_k}$ are global optimal solutions, if any;
        \item\label{lem:ConvAna:3}
            If $\sum_{k=0}^{\infty} \alpha_k<\infty$, then $\seq{x_k}$ converges to a global optimal solution.
    \end{enumerateq}
\end{lem}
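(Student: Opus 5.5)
For \ref{lem:ConvAna:1}, I will use the same mean-value argument as in \Cref{lem:tau}. Fix $x_k\in\T_1$ and a point $x^*\in\proj_{\X^*}(x_k)$. The tube $\T_1$ is defined through the distance to $\X^*$, and the segment $[x_k,x^*]$ lies in $X$ by convexity. Every point $z$ on this segment satisfies $\dist(z;\X^*)\le\|z-x^*\|\le\dist(x_k;\X^*)$, so the whole segment stays in $\T_1$. The mean value theorem \cite[Theorem 3.51]{mordukhovich2006variational} then gives some $z\in[x_k,x^*)$ and $\zeta\in\partial f(z)$ with
\[
f(x_k)-f^*\le\langle\zeta,x_k-x^*\rangle\le L\|x_k-x^*\|=L\dist(x_k;\X^*),
\]
where $\|\zeta\|\le L$ by the definition of $L$. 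Since $\dist(x_k;\X^*)\to0$, we have $x_k\in\T_1$ for all large $k$. Combining this with $f(x_k)\ge f^*$ yields $f(x_k)\to f^*$.

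For \ref{lem:ConvAna:2}, let $\bar x$ be a cluster point, say $x_{k_j}\to\bar x$. The set $X$ is closed, so $\bar x\in X$. The map $\dist(\cdot;\X^*)$ is $1$-Lipschitz, so $\dist(\bar x;\X^*)=\lim_j\dist(x_{k_j};\X^*)=0$. Next I would argue that $\X^*$ is closed, which gives $\bar x\in\X^*$. Near $\bar x$ the function $f$ is continuous by \Cref{thm:LocLip-RWC}, since $\X^*\subseteq X\subseteq\interior\dom h$. Alternatively one can use lower semicontinuity together with $f(x_{k_j})\to f^*$ from part \ref{lem:ConvAna:1}, which gives $f(\bar x)\le f^*$ and hence $\bar x\in\X^*$.

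For \ref{lem:ConvAna:3}, I will show that $\seq{x_k}$ is Cauchy. The projection $\proj_X$ is nonexpansive and $x_k\in X$, so
\[
\|x_{k+1}-x_k\|=\big\|\proj_X\big(x_k-\alpha_k\zeta_k/\|\zeta_k\|\big)-\proj_X(x_k)\big\|\le\alpha_k.
\]
This also covers the case $\zeta_k=0$, where the iterate is stationary. Summing gives $\|x_m-x_k\|\le\sum_{i=k}^{m-1}\alpha_i\to0$, so the sequence is Cauchy and converges to some $\bar x$. That limit is a cluster point, so part \ref{lem:ConvAna:2} shows it is a global minimizer.

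The main subtle step is justifying that the segment $[x_k,x^*]$ lies in the region where the bound $L$ applies and where \Cref{thm:LocLip-RWC} yields local Lipschitz continuity. Part (a) is where I expect the most care to be needed. I would handle it as described above, using convexity of $X$ and the distance-monotonicity along the segment, and otherwise take the mean value theorem in the Clarke/limiting form valid for locally Lipschitz functions.
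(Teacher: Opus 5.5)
Your proposal is correct and follows essentially the same route as the paper: the mean value theorem along $[x_k,x^*]$ for (a), continuity of $\dist(\cdot;\X^*)$ together with closedness of $\X^*$ for (b), and $\|x_{k+1}-x_k\|\le\alpha_k$ plus summability for (c). Two of your steps are left implicit in the paper's proof and are worth keeping: the check that the whole segment $[x_k,x^*]$ stays in $\T_1$, so the bound $L$ applies to the subgradient at the intermediate point $z$ and not only at $x_k$, and the justification of closedness of $\X^*$ via continuity of $f$.
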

\begin{proof}
    \ref{lem:ConvAna:1} It follows from $\displaystyle\lim_{k\to\infty}\dist(x_k;\X^*)=0$ that there exists $k_0\in \N_0$ such that $x_k\in \T_1$ for all $k\ge k_0$ implying $\|\zeta_k\|\le L$. Let us consider $x^*_k \in \proj_{\mathcal{X}^*} {(x_k)}$. Using the mean value theorem \cite[Theorem 3.51]{mordukhovich2006variational}, there exist $z_k\in [x_k,x^*_k)$ and $\zeta_k \in \partial f(z_k)$ satisfying
    $f(x_k) - f^* \leq \langle \zeta_k , x_k- x_k^* \rangle.$ It can be concluded that
    $$0\le f(x_k) - f^* \leq \Vert\zeta_k\Vert ~\Vert x_k- x_k^*\Vert\leq L\dist(x_k;\mathcal{X}^*),\quad\quad k\ge k_0,$$
    confirming $f(x_k)\to f^*$.\\
    \ref{lem:ConvAna:2} Assume that $\bar x\in X$ is a cluster point of $\seq{x_k}$, i.e., there exists a subsequence $\seqj{x_{k_j}}$ of $\seq{x_{k}}$ such that $x_{k_j}\to \bar x$ as $j\to \infty$. Then
    \[
    \dist(\bar x;\X^*) = \lim_{j\to \infty} \dist(x_{k_j};\X^*) = \lim_{k\to \infty} \dist(x_{k};\X^*) =0,
    \]
    which verifies that $\bar x\in \X^*$ due to the closeness of $\X^*$.\\
    \ref{lem:ConvAna:3} From $x_{k+1}=\proj_X\left(x_k-\alpha_k\tfrac{\zeta_k}{\|\zeta_k\|}\right)$ and the nonexpansiveness of $\operatorname{proj}_{X}$, it holds that
    $\|x_{k+1}-x_{k}\|\le \alpha_k$.
    Additionally,
    $\sum_{k=0}^{\infty} \|x_{k+1}-x_k\| \le \sum_{k=0}^{\infty} \alpha_k < \infty$
    implies the convergence of $\seq{x_k}$ to a global optimal solution based on Assertion~\ref{lem:ConvAna:2}, giving our desired result.
\end{proof}

\subsection{{\bf PSGA with constant step-size}}\label{sec:PSGA-C}
In this subsection, we consider a projected subgradient method with a constant step-size, $\alpha_{k}=\alpha>0$ for all $k$.
We demonstrate that, with a suitable choice of initialization, the distance sequence $\seq{\dist(x_k; \mathcal{X}^*)}$ exhibits linear convergence up to a predetermined threshold.
\begin{thm}[{\bf Convergence rate of constant PSGA}]\label{thm:PSGA-C1}
    Let the sequence $\seq{x_{k}}$ be generated by PSGA with the constant step-size $\alpha_{k}=\alpha$ satisfying
    $
    0<\alpha\le\tfrac{2(1-\beta)\tau}{\sqrt{1+4(1-\beta)^2\tau^{2}}}\Big(\tfrac{\beta\mu}{\rho G_{h}}\Big)^{\nicefrac{1}{\nu}}
    $. Then,
    \begin{equation}\label{eq:Ineq-CS}
        \dist^2(x_k; \mathcal{X}^*)-D_*^2 \leq \max \left\{\left(\dist^2(x_0; \mathcal{X}^*)-D_*^2\right)q^{k},
        \alpha^{2}\right\},   \quad\quad \forall k\ge 0,
    \end{equation}
    where $D_* := \tfrac{\alpha}{2(1-\beta)\tau}$ and $q:= 1- (1-\beta)\alpha\tau\Big(\tfrac{\rho G_{h}}{\beta\mu}\Big)^{\nicefrac{1}{\nu}}>0.$
\end{thm}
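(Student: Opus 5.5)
We first check that the iterates stay in the tube, so that Lemma~\ref{lem:Basic1} can be applied at every step. The step-size bound gives $\alpha\le \min\{1,2(1-\beta)\tau\}\big(\tfrac{\beta\mu}{\rho G_h}\big)^{1/\nu}$. Indeed, $\tfrac{2(1-\beta)\tau}{\sqrt{1+4(1-\beta)^2\tau^2}}$ is at most both $1$ and $2(1-\beta)\tau$. Lemma~\ref{lem:Basic1}\ref{lem:Basic1:2} then gives $x_k\in\T_\beta$ for all $k$. Writing $d_k:=\dist(x_k;\X^*)$, we have $d_k<r_\beta:=\big(\tfrac{\beta\mu}{\rho G_h}\big)^{1/\nu}$ and $\|\zeta_k\|\le L$. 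Inequality \cref{eq:lem-Basic02} then yields the scalar recursion
\[
d_{k+1}^2\le d_k^2-2(1-\beta)\tau\alpha\, d_k+\alpha^2 .
\]
We will use the identity $2(1-\beta)\tau=\alpha/D_*$, which rewrites the recursion as $d_{k+1}^2\le d_k^2-\tfrac{\alpha^2}{D_*}d_k+\alpha^2$.

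The plan is a two-case argument, with the threshold $d_k = D_*$.

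\emph{Case $d_k\ge D_*$.} Since $d_k<r_\beta$, we have $d_k\ge d_k^2/r_\beta$. Hence
\[
d_{k+1}^2-D_*^2\le d_k^2-D_*^2-(1-\beta)\tau\alpha\, d_k-\big[(1-\beta)\tau\alpha\, d_k-\alpha^2\big].
\]
The bracket is nonnegative because $d_k\ge D_*=\tfrac{\alpha}{2(1-\beta)\tau}$. For the middle term we use $d_k\ge d_k^2/r_\beta\ge (d_k^2-D_*^2)/r_\beta$. This gives $d_{k+1}^2-D_*^2\le q\,(d_k^2-D_*^2)$ with $q=1-(1-\beta)\alpha\tau/r_\beta$, which is exactly the stated $q$. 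Positivity of $q$ follows from $\alpha\le 2(1-\beta)\tau\, r_\beta$ up to the $\sqrt{\cdot}$ factor, since $(1-\beta)\alpha\tau/r_\beta<1$.

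\emph{Case $d_k<D_*$.} The map $t\mapsto t^2-\tfrac{\alpha^2}{D_*}t+\alpha^2$ is convex on $[0,D_*]$, so its maximum is attained at an endpoint and equals $\max\{\alpha^2,D_*^2\}$. Hence $d_{k+1}^2\le\max\{\alpha^2,D_*^2\}$. I would like this to give $d_{k+1}^2-D_*^2\le\alpha^2$, and it does: if the max is $D_*^2$ the left side is $\le 0$, and otherwise it is $\le\alpha^2-D_*^2\le\alpha^2$.

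Finally we combine the cases by induction on $k$, claiming \cref{eq:Ineq-CS} for every $k$. Suppose the claim holds at step $k$. If $d_k<D_*$, the second case gives the bound $\alpha^2$ at step $k+1$. If $d_k\ge D_*$, contraction gives $d_{k+1}^2-D_*^2\le q(d_k^2-D_*^2)$. This is at most $q\max\{(d_0^2-D_*^2)q^k,\alpha^2\}\le\max\{(d_0^2-D_*^2)q^{k+1},\alpha^2\}$, using $q\le1$ and, in the first entry, that $d_k^2-D_*^2\ge0$.

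The main obstacle is getting exactly the constant $q$ in the contraction case while using the first step-size bound. I expect that the particular constant $\tfrac{2(1-\beta)\tau}{\sqrt{1+4(1-\beta)^2\tau^2}}$ is what makes $D_*\le r_\beta$ hold and keeps the case split consistent with the tube. So I would first verify $D_*<r_\beta$ from the step-size bound, since otherwise the contraction case would be vacuous. I would also double-check the linearization $d_k\ge (d_k^2-D_*^2)/r_\beta$, and if it turns out too lossy, replace it with $d_k\ge d_k^2/r_\beta$ combined with the bracket term.
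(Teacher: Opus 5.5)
Your overall structure is the paper's: tube via Lemma~\ref{lem:Basic1}\ref{lem:Basic1:2}, the recursion $d_{k+1}^2\le d_k^2-2(1-\beta)\tau\alpha d_k+\alpha^2$, a split at $d_k=D_*$, and induction. Your second case and your induction step are fine.

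The gap is in the contraction case. The bracket $(1-\beta)\tau\alpha d_k-\alpha^2$ is \emph{not} nonnegative for all $d_k\ge D_*$. Since $(1-\beta)\tau\alpha D_*=\alpha^2/2$, the bracket is nonnegative only when $d_k\ge 2D_*=\alpha/((1-\beta)\tau)$; at $d_k=D_*$ it equals $-\alpha^2/2$. So on $D_*\le d_k<2D_*$ you cannot drop the bracket, and the contraction is not proved there. The fallback you mention does not close this either. Bounding the middle term by $d_k^2/r_\beta$ while keeping the bracket would require $2D_*-d_k\le D_*^2/r_\beta$. This fails at $d_k=D_*$, because $D_*<r_\beta$.

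The missing idea is to keep the two linear terms together:
\[
-2(1-\beta)\tau\alpha d_k+\alpha^2=-2(1-\beta)\tau\alpha(d_k-D_*).
\]
Then, for $D_*\le d_k<r_\beta$,
\[
2(d_k-D_*)\;\ge\;\frac{(d_k-D_*)(d_k+D_*)}{d_k}\;=\;\frac{d_k^2-D_*^2}{d_k}\;\ge\;\frac{d_k^2-D_*^2}{r_\beta}.
\]
The first inequality uses $d_k+D_*\le 2d_k$, and the second uses $d_k<r_\beta$. Together they give $d_{k+1}^2-D_*^2\le q\,(d_k^2-D_*^2)$ directly, with no separate check of $D_*<r_\beta$ needed.

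This is exactly the paper's step. There the first inequality is phrased as concavity of $t\mapsto\sqrt t$, i.e.\ $D_*-d_k\le (D_*^2-d_k^2)/(2d_k)$. That gives $d_{k+1}^2-D_*^2\le\bigl(1-\tfrac{(1-\beta)\tau\alpha}{d_k}\bigr)(d_k^2-D_*^2)$, after which $d_k<r_\beta$ is applied.

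A minor point: $q>0$ does not follow from $\alpha\le 2(1-\beta)\tau r_\beta$ alone, since that only gives $(1-\beta)\tau\alpha/r_\beta\le 2(1-\beta)^2\tau^2$. Instead, use that the full factor $2(1-\beta)\tau/\sqrt{1+4(1-\beta)^2\tau^2}$ is $<1$, so $\alpha<r_\beta$, together with $(1-\beta)\tau<1$.
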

\begin{proof}
    By \Cref{lem:tau}, $\tau \in (0,1]$, implying $\tfrac{2(1-\beta)\tau}{\sqrt{1+4(1-\beta)^2\tau^{2}}} < 1<\tfrac{1}{(1-\beta)\tau}$. Thus, $q>0$, due to the upper bound of $\alpha$.
    Moreover, \(\tfrac{2(1-\beta)\tau}{\sqrt{1+4(1-\beta)^2\tau^{2}}} < \min\{1,2(1-\beta)\tau\}\), ensuring $\seq{x_k}\subseteq \T_\beta$ by \Cref{lem:Basic1}~\ref{lem:Basic1:2}.
    We now verify the inequality \Cref{eq:Ineq-CS} using induction. At $k=0$, the inequality holds trivially. Assume that for some $k \in \mathbb{N}$, the following holds (inductive hypothesis):
    $$
    \dist^2(x_i; \mathcal{X}^*)-D_*^2 \leq \max \left\{q^{i}\left(\dist^2(x_0; \mathcal{X}^*)-D_*^2\right),
        \alpha^{2}\right\}, \quad \forall i=0,1, \ldots, k.
    $$
    We prove the claim for $(k+1)-$th step. It follows from \cref{lem:Basic1}~\ref{lem:Basic1:1}, $\|\zeta_k\|\le L$, and the concavity of function $t\mapsto \sqrt t$ on $[0,+\infty)$ that
    \begin{align*}
        \dist^2(x_{k+1};\mathcal{X}^*) &\leq \dist^2(x_{k};\mathcal{X}^*) -2(1-\beta)\tau\alpha \dist(x_{k};\mathcal{X}^*) + \alpha^2\\
        &=\dist^2(x_{k};\mathcal{X}^*) +2(1-\beta)\tau\alpha\Big( D_*-\dist(x_{k};\mathcal{X}^*) \Big)\\
        &\leq \dist^2(x_{k};\mathcal{X}^*) + \tfrac{(1-\beta)\tau\alpha}{\dist(x_{k};\mathcal{X}^*)}\Big(D_*^2 -\dist^2(x_{k};\mathcal{X}^*)\Big),
    \end{align*}
    i.e.,
    \begin{equation*}
        \dist^2(x_{k+1};\mathcal{X}^*)-D_{*}^2 \leq \bigg(1 - \tfrac{(1-\beta)\tau\alpha}{\dist(x_{k};\mathcal{X}^*)}\bigg)\Big(\dist^2(x_{k};\mathcal{X}^*)-D_{*}^2\Big).
    \end{equation*}
    There are two possible cases: $(i)$ $\dist(x_{k};\mathcal{X}^*) \geq D_{*}$; $(ii)$ $\dist(x_{k};\mathcal{X}^*) < D_{*}$.
    
    In Case~$(i)$, it follows from  $x_k\in \T_\beta$ that $\dist^2(x_{k+1};\mathcal{X}^*)-D_{*}^2 \leq q \Big(\dist^2(x_{k};\mathcal{X}^*)-D_{*}^2\Big)$.
    In Case~$(ii)$, relying once again on \cref{lem:Basic1}~\ref{lem:Basic1:1}, we obtain
    $$
    \dist^2(x_{k+1};\mathcal{X}^*)-D_{*}^2 \leq \dist^2(x_{k};\mathcal{X}^*) - D_{*}^2 -2(1-\beta)\tau\alpha \dist(x_{k};\mathcal{X}^*) + \alpha^2< \alpha^{2}.
    $$
    Hence, in both cases and by the induction assumption, it can be deduced
    $$
        \dist^2(x_{k+1};\mathcal{X}^*)-D_{*}^2 \leq \max \left\{q\left(\dist^2(x_{k};\mathcal{X}^*)-D_{*}^2\right), \alpha^{2}\right\}\leq \max \left\{q^{k+1}\left(\dist^2(x_{0};\mathcal{X}^*)-D_{*}^2\right),
        \alpha^{2}\right\},
    $$
    completing the proof.
\end{proof}

\Cref{thm:PSGA-C1} guarantees that the sequence $\seq{\dist(x_k; \mathcal{X}^*)}$ decreases linearly to a value below $D_{*}$.  Moreover, once $\dist(x_k; \mathcal{X}^*)$ falls below $D_{*}$, it stays within a neighborhood of this threshold.

The next theorem provides an upper bound for the function gap sequence.

\begin{thm}[{\bf Convergence rate of constant PSGA}]\label{thm:Conv-C2}
    Let the sequence $\seq{x_{k}}$ be generated by PSGA with the constant step-size $\alpha_{k}=\alpha$ satisfying
    \(0<\alpha \le \min\big\{1,2(1-\beta)\tau\big\} \Big(\tfrac{\beta\mu}{\rho G_{h}}\Big)^{\nicefrac{1}{\nu}}\). Then,
    $$
    0\leq f^{*}_{k} - f^{*} \leq \tfrac{L\alpha}{1-\beta}, \quad\quad\forall k\geq \tfrac{1}{\alpha^{2}}\Big(\tfrac{\beta\mu}{\rho G_{h}}\Big)^{\nicefrac{2}{\nu}}-1.
    $$
\end{thm}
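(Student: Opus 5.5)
The plan is to combine the invariance of the tube from \Cref{lem:Basic1}~\ref{lem:Basic1:2} with the summed descent inequality already derived in the proof of \Cref{lem:Basic2}, and then to choose the iteration count so that the accumulated step-sizes dominate the initial distance. First, the assumed upper bound on $\alpha$ is exactly the hypothesis of \Cref{lem:Basic1}~\ref{lem:Basic1:2}, so $\seq{x_k}\subseteq\T_\beta$. Hence $\|\zeta_k\|\le L$ for every $k$, and \cref{eq:lem-Basic01} holds at every iterate. The lower bound $f_k^*-f^*\ge 0$ is immediate, since $f^*$ is the optimal value and $x_i\in X$.

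For the upper bound, I would not route through \cref{eq:lem:basic2:02}, because it involves $f(x_0)-f^*$. Instead I would return to the first inequality in \cref{eq:lem-Basic01L}, specialised to $\alpha_i=\alpha$:
\[
\tfrac{2(1-\beta)(k+1)\alpha}{L}\big(f_k^*-f^*\big)\le \dist^2(x_0;\X^*)+(k+1)\alpha^2 .
\]
Since $x_0\in\T_\beta$, we have $\dist^2(x_0;\X^*)<\big(\tfrac{\beta\mu}{\rho G_h}\big)^{\nicefrac{2}{\nu}}$. Dividing through then gives
\[
f_k^*-f^*\le \tfrac{L}{2(1-\beta)}\Big(\tfrac{1}{(k+1)\alpha}\big(\tfrac{\beta\mu}{\rho G_h}\big)^{\nicefrac{2}{\nu}}+\alpha\Big).
\]

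The threshold on $k$ is chosen precisely to control the first term. The condition $k+1\ge \alpha^{-2}\big(\tfrac{\beta\mu}{\rho G_h}\big)^{\nicefrac{2}{\nu}}$ is equivalent to $\tfrac{1}{(k+1)\alpha}\big(\tfrac{\beta\mu}{\rho G_h}\big)^{\nicefrac{2}{\nu}}\le\alpha$. Substituting this, the bracket is at most $2\alpha$, which yields $f_k^*-f^*\le \tfrac{L\alpha}{1-\beta}$.

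The only potential subtlety is making sure the summed inequality uses the tube bound on $\dist(x_0;\X^*)$ rather than the sharpness-based bound. The latter would introduce $f(x_0)$ and would not match the stated iteration threshold. Everything else is routine bookkeeping.
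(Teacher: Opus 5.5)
Your proposal is correct and follows the paper's proof essentially verbatim. Both arguments use invariance of $\mathcal{T}_\beta$ from \Cref{lem:Basic1}~\ref{lem:Basic1:2}, then the first inequality in \cref{eq:lem-Basic01L} with $\alpha_i=\alpha$ and the tube bound $\dist^2(x_0;\mathcal{X}^*)\le(\tfrac{\beta\mu}{\rho G_h})^{2/\nu}$ rather than \cref{eq:lem:basic2:02}, and finally the threshold $(k+1)\alpha^2\ge(\tfrac{\beta\mu}{\rho G_h})^{2/\nu}$ to bound the bracket by $2\alpha$.
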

\begin{proof}
    By \cref{lem:Basic1}~\ref{lem:Basic1:2}, the upper bound of $\alpha$ ensures that $\seq{x_{k}}\subseteq \mathcal{T}_\beta$. Furthermore, from \Cref{lem:Basic2} and inequality \cref{eq:lem-Basic01L}, we deduce
    $$
    f_{k}^{*}- f^{*} \leq \tfrac{L\dist^2(x_0;\X^*)+ L \sum_{i=0}^{k}\alpha_{i}^{2}}{2(1-\beta)\sum_{i=0}^{k}\alpha_{i}}
    \leq  \tfrac{L\Big(\tfrac{\beta\mu}{\rho G_{h}}\Big)^{\nicefrac{2}{\nu}}+ L \alpha^{2}(k+1)}{2(1-\beta)\alpha (k+1)} \leq \tfrac{2L\alpha^{2}(k+1)}{2(1-\beta)\alpha (k+1)}=\tfrac{L\alpha}{1-\beta}, \quad\quad\forall k\geq \tfrac{1}{\alpha^{2}}\Big(\tfrac{\beta\mu}{\rho G_{h}}\Big)^{\nicefrac{2}{\nu}}-1,
    $$
    adjusting our claim.
\end{proof}

\subsection{{\bf PSGA with diminishing step-sizes}}\label{sec:PSGA-DS}

In this section, we analyze the projected subgradient method with a diminishing step-size aimed at achieving convergence to an optimal solution. We begin by examining the method's behavior under two step-size regimes: $(i)$ nonsummable diminishing step-sizes (ND), and $(ii)$ square-summable yet nonsummable step-sizes (SSN). The following theorem establishes the convergence of the sequence of gap values $\seq{f_k^* - f^*}$, where $\seq{x_k}$ is generated by PSGA using ND step-size satisfying
$$
\alpha_{k}\geq 0,\quad \lim_{k\to\infty}\alpha_{k}=0, \quad \sum_{k=1}^{\infty}\alpha_{k} =\infty.
$$
In addition, it ensures the convergence of a subsequence of the iterates $\seq{x_k}$.

\begin{thm}[{\bf Convergence rate of ND PSGA}]\label{thm:Conv-ND}
    Let the sequence $\seq{x_{k}}$ be generated by PSGA with ND step-size $\alpha_{k}$ satisfying
    \(0<\alpha_k \le \min\big\{1,2(1-\beta)\tau\big\} \Big(\tfrac{\beta\mu}{\rho G_{h}}\Big)^{\nicefrac{1}{\nu}}\). Then, the following statements hold:
    \begin{enumerateq}[label=(\alph*), ref=(\alph*)]
        \item\label{thm:Conv-ND:1}
        $\displaystyle\lim_{k\to \infty} f^{*}_{k}=\liminf_{k\to\infty} f(x_{k}) = f^{*}$;
        \item\label{thm:Conv-ND:2}
        $\displaystyle\liminf_{k\to\infty}\dist(x_{k};\X^*)= 0$;
        \item\label{thm:Conv-ND:3} If $\seq{x_{k}}$ is a bounded sequence, it has a convergent subsequence to some optimal solution. 
    \end{enumerateq}
\end{thm}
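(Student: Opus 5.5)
The step-size bound is exactly the one in \Cref{lem:Basic1}~\ref{lem:Basic1:2}, so I would first invoke that lemma to get $\seq{x_k}\subseteq\T_\beta$. This gives $\|\zeta_k\|\le L$ for all $k$, and \Cref{lem:Basic2} becomes available. The whole proof is then a diminishing-step argument built on the summed recursion \cref{eq:lem-Basic01L}.

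For \ref{thm:Conv-ND:1}, I will not use \cref{eq:lem:basic2:02} directly, since $\sum\alpha_i^2$ need not be finite. Instead, for any $\varepsilon>0$, I will show that $f(x_k)-f^*\le\varepsilon$ for infinitely many $k$. Argue by contradiction: suppose $f(x_k)-f^*>\varepsilon$ for all $k\ge K$. Because $\alpha_k\to0$, I can enlarge $K$ so that $\alpha_k\le (1-\beta)\varepsilon/L$ for $k\ge K$. Then \cref{eq:lem-Basic01} with $\|\zeta_k\|\le L$ gives
\[
\dist^2(x_{k+1};\X^*)\le\dist^2(x_k;\X^*)-\alpha_k\Big(\tfrac{2(1-\beta)\varepsilon}{L}-\alpha_k\Big)\le \dist^2(x_k;\X^*)-\tfrac{(1-\beta)\varepsilon}{L}\alpha_k .
\]
Summing from $K$ and using $\sum\alpha_k=\infty$ forces $\dist^2(x_k;\X^*)\to-\infty$, which is absurd. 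Hence $\liminf_k f(x_k)\le f^*$, and this is an equality because $f(x_k)\ge f^*$. Since $f_k^*$ is nonincreasing and bounded below by $f^*$, it converges, and its limit must equal $\liminf_k f(x_k)=f^*$.

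For \ref{thm:Conv-ND:2}, I will use the sharpness error bound $\mu\dist(x_k;\X^*)\le f(x_k)-f^*$ along the subsequence realizing the $\liminf$ in \ref{thm:Conv-ND:1}. This immediately gives $\liminf_k\dist(x_k;\X^*)=0$. For \ref{thm:Conv-ND:3}, I take a subsequence with $\dist(x_{k_j};\X^*)\to0$. Boundedness lets me extract a further convergent subsequence $x_{k_j}\to\bar x$. Continuity of the distance function then gives $\dist(\bar x;\X^*)=0$, and closedness of $\X^*$ gives $\bar x\in\X^*$, exactly as in \Cref{lem:ConvAna}~\ref{lem:ConvAna:2}.

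The main obstacle I expect is the contradiction step in \ref{thm:Conv-ND:1}. The $\alpha_k^2$ term has to be absorbed using only $\alpha_k\to0$, not square summability. This is why $K$ must be chosen depending on $\varepsilon$ and why the recursion has to be applied only from $K$ onward.
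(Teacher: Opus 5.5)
Your proof is correct, but for part (a) it takes a different route from the paper's. The paper does not argue by contradiction on the distance recursion. It invokes the ergodic best-iterate estimate \cref{eq:lem:basic2:02} of \Cref{lem:Basic2} and then defers to the standard argument of \cite{rahimi2024projected}.

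Your reason for discarding that estimate is mistaken: square summability is not needed. Since $\alpha_k\to0$ and $\sum_k\alpha_k=\infty$, one already has $\sum_{i=0}^{k}\alpha_i^2\big/\sum_{i=0}^{k}\alpha_i\to0$. This follows from Stolz--Ces\`aro, because the ratio of increments is $\alpha_{k+1}\to0$; alternatively, split the sum at an index beyond which $\alpha_i\le\varepsilon$. Hence the right-hand side of \cref{eq:lem:basic2:02} tends to zero, and $f_k^*\to f^*$ is immediate.

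The two routes buy different things. The paper's gives an explicit rate for $f_k^*-f^*$ once a concrete ND schedule such as $\alpha_k=\lambda(k+k_0)^{-r}$ is fixed; yours is purely qualitative. On the other hand, yours shows directly that every sublevel set $\{f\le f^*+\varepsilon\}$ is visited infinitely often. That is exactly $\liminf_k f(x_k)=f^*$, and $\lim_k f_k^*=\inf_k f(x_k)=f^*$ comes as a by-product.

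The best-iterate bound as displayed only controls $\inf_k f(x_k)$. For instance, an iterate may land in $\mathcal{X}^*$ and the method then move off along a nonzero subgradient. To reach the $\liminf$ from that bound, one must restart the summation at an arbitrary index $K$; your argument makes this step unnecessary.

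Your treatment of (b) and (c) is the natural completion of either route. It uses sharpness along the $\liminf$ subsequence, then compactness together with closedness of $\mathcal{X}^*$, as in \Cref{lem:ConvAna}~\ref{lem:ConvAna:2}.
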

\begin{proof}
    By \cref{lem:Basic1}~\ref{lem:Basic1:2} and \cref{lem:Basic2}, $\seq{x_{k}}\subseteq \mathcal{T}_\beta$ and
    $$
    f_{k}^{*}- f^{*} \leq \tfrac{\big(f(x_{0}) - f^{*}\big)^2+ \mu^2 \sum_{i=0}^{k}\alpha_{i}^{2}}{2(1-\beta)\mu\tau\sum_{i=0}^{k}\alpha_{i}},\quad\quad \forall k\ge 0.
    $$
    Thus, the proof follow the same arguments as in the proof of \cite[Theorem 4.6]{rahimi2024projected}.
\end{proof}

The previous theorem established that the PSGA with a ND step-size guarantees subsequential convergence. We now extend this result by proving full convergence of the sequences $\seq{\dist(x_k;\X^*)}$ and $\seq{f(x_k)-f^*}$ at which $\seq{x_k}$ is generated by PSGA under a more restrictive step-size rule. Specifically, we assume the SSN step-sizes satisfying:
$$
\alpha_{k}\geq 0, \quad \sum_{k=1}^{\infty}\alpha_{k} =\infty, \quad \sum_{k=1}^{\infty}\alpha_{k}^{2} <\infty.
$$

\begin{thm}[{\bf Convergence rate of SSN PSGA}]\label{thm:Conv-SSN}
    Let the sequence $\seq{x_{k}}$ be generated by PSGA with SSN step-size $\alpha_{k}$ satisfying
    \(0<\alpha_k \le \min\big\{1,2(1-\beta)\tau\big\} \Big(\tfrac{\beta\mu}{\rho G_{h}}\Big)^{\nicefrac{1}{\nu}}\). Then, $\displaystyle\lim_{k\to\infty}\dist(x_{k};\X^*)= 0$, $\displaystyle\lim_{k\to\infty}f(x_k)=f^*$, and all cluster points of the sequence $\seq{x_k}$ are global optimal solutions, if any. 
\end{thm}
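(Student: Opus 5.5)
The plan is to combine \Cref{lem:Basic1} with a Robbins--Siegmund-type argument on $d_k:=\dist^2(x_k;\X^*)$, and then to use \Cref{lem:ConvAna} to transfer the limit to function values and cluster points.

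By \Cref{lem:Basic1}~\ref{lem:Basic1:2}, the bound on $\alpha_k$ gives $\seq{x_k}\subseteq\T_\beta$. We may therefore apply \cref{eq:lem-Basic02} with $\|\zeta_k\|\le L$ at every $k$, which yields
\[
d_{k+1}\le d_k-2(1-\beta)\tau\alpha_k\sqrt{d_k}+\alpha_k^2 .
\]
Two consequences follow. First, $d_{k+1}\le d_k+\alpha_k^2$, and since $\sum\alpha_k^2<\infty$, the sequence $d_k+\sum_{i\ge k}\alpha_i^2$ is nonincreasing and bounded below. Hence $\lim_{k\to\infty}d_k=:d_\infty$ exists. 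Second, summing the inequality gives
\[
2(1-\beta)\tau\sum_k\alpha_k\dist(x_k;\X^*)\le d_0+\sum_k\alpha_k^2<\infty .
\]
Suppose $d_\infty>0$. Then $\dist(x_k;\X^*)\ge\sqrt{d_\infty}/2$ for all large $k$, so $\sum_k\alpha_k\dist(x_k;\X^*)=\infty$ because $\sum\alpha_k=\infty$. This is a contradiction, so $d_\infty=0$, that is, $\lim_{k\to\infty}\dist(x_k;\X^*)=0$.

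With $\dist(x_k;\X^*)\to0$ established, \Cref{lem:ConvAna}~\ref{lem:ConvAna:1} gives $0\le f(x_k)-f^*\le L\dist(x_k;\X^*)\to0$, and \Cref{lem:ConvAna}~\ref{lem:ConvAna:2} shows that every cluster point of $\seq{x_k}$ lies in $\X^*$. This completes the statement.

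The main step is the existence of $\lim d_k$. It relies on the quasi-Fejér monotonicity of $d_k$, which requires the iterates to stay in $\T_\beta$ so that the cross term is nonpositive; \Cref{lem:Basic1}~\ref{lem:Basic1:2} supplies exactly this. Once the limit exists, the nonsummability of $\alpha_k$ forces it to be zero, and the rest is routine. An alternative route uses only \Cref{thm:Conv-ND}, giving $\liminf d_k=0$, together with existence of the limit; this reaches the same conclusion.
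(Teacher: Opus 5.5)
Your proof is correct, but it reaches $\liminf\dist(x_k;\mathcal{X}^*)=0$ by a different route from the paper. The paper takes essentially the ``alternative route'' you mention at the end:
\begin{itemize}
\item it invokes \Cref{thm:Conv-ND} to get $\liminf_{k\to\infty} f(x_k)=f^*$ and extracts a subsequence with $f(x_{k_j})\to f^*$;
\item it uses the sharpness error bound to turn this into $\dist(x_{k_j};\mathcal{X}^*)\to 0$;
\item it propagates along the tail via $\dist^2(x_k;\mathcal{X}^*)\le \dist^2(x_{k_j};\mathcal{X}^*)+\sum_{i\ge k_j}\alpha_i^2$ for $k>k_j$.
\end{itemize}
That last bound is the same quasi-Fej\'er inequality $d_{k+1}\le d_k+\alpha_k^2$ that drives your existence-of-limit step.

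You instead sum the distance recursion obtained from \cref{eq:lem-Basic02} directly, which gives $\sum_k\alpha_k\dist(x_k;\mathcal{X}^*)<\infty$. Together with $\sum_k\alpha_k=\infty$ and the existence of $\lim_k d_k$, this forces the limit to be zero.

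Your version is self-contained. It never passes through function values or through \Cref{thm:Conv-ND}, whose proof the paper only delegates to an external reference. It also yields the quantitative by-product $\sum_k\alpha_k\dist(x_k;\mathcal{X}^*)<\infty$. The paper's version is shorter on the page because it reuses the ND result.

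Both arguments finish identically with \Cref{lem:ConvAna}.
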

\begin{proof}
    By virtue of \cref{thm:Conv-ND},
    $\displaystyle\liminf_{k\to \infty} f(x_{k})= \lim_{k\to \infty} f^{*}_{k} = f^{*},$
    which ensures that
    $\displaystyle\lim_{j\to \infty} f(x_{k_{j}}) = f^{*},$
    for some subsequence $\seqj{x_{k_j}}$ of $\seq{x_{k}}$.
    Furthermore, it follows from sharpness error bound inequality that $\dist(x_{k_j};\X^*)\to 0$ as $j\to\infty$.
    Let us fix $j\in \N_0$. By \cref{lem:Basic1}, we obtain $\seq{x_k}\subseteq \T_\beta$ and
    \begin{align*}
        \dist^2(x_{k};\mathcal{X}^*) &\leq \dist^2(x_{k-1};\mathcal{X}^*) -\tfrac{\alpha_{k-1}}{L}\left(f(x_{k-1})-f^{*}\right)+\alpha_{k-1}^{2}
        \leq \dist^2(x_{k-1};\mathcal{X}^*) +\alpha_{k-1}^{2}\\
        &\le \dist^2(x_{k-2};\mathcal{X}^*) +\alpha_{k-2}^{2}+\alpha_{k-1}^{2}
        \leq \ldots \leq \dist^2(x_{k_j};\mathcal{X}^*) +\sum_{i=k_j}^{k-1} \alpha_{i}^{2}\\[-3mm]
        &\le \dist^2(x_{k_j};\mathcal{X}^*) +\sum_{i=k_j}^{\infty} \alpha_{i}^{2},\quad\quad \forall k> k_{j},
    \end{align*}
    ensuring $\dist(x_{k};\X^*)\to 0$ as $k\to\infty$ inasmuch as $\sum_{i=0}^{\infty} \alpha_{i}^{2}<\infty$. The desired convergence of the objective values and the characterization of the cluster points are immediate consequences of \Cref{lem:ConvAna}~\ref{lem:ConvAna:1} and \ref{lem:ConvAna:2}, completing the proof.
\end{proof}

In what follows, we investigate the convergence properties of the PSGA under two structured step-size policies: diminishing step-size and geometrically decaying (GD) schemes.
We first consider a diminishing step-size given by $\alpha_{k}=\lambda (k+k_0)^{-r}$ with constant parameters $\lambda,k_0>0$ and $0<r<1$.
Such step-size schedules appear frequently in both deterministic frameworks and stochastic optimization algorithms.
The next theorem establishes a sublinear convergence rate for the PSGA under this class of step-sizes.

\begin{thm}[{\bf Convergence rate of diminishing PSGA}]\label{thm:DS}
    Let $r\in (0,1)$ and $\lambda >0$.
    Set
    $$
    A:=2^{r}\lambda~\sqrt{\tfrac{1}{(1-\beta)^2\tau^2}+1},\quad \text{and}\quad k_0:=\max\left\{ \tfrac{2r A}{(1-\beta)\lambda\tau}, A^{\tfrac{1}{r}}\Big(\tfrac{\rho G_{h}}{\beta\mu}\Big)^{\tfrac{1}{r\nu}}, 1\right\}.
    $$
    Let the sequence $\seq{x_{k}}$ be generated by PSGA with SSN step-size $\alpha_{k}=\lambda (k+k_0)^{-r}$ and the initial point $x_0\in \T_\beta$ satisfying
    \(\dist(x_0;\X^*)\le Ak_{0}^{-r}\). Then, the following inequalities hold:
    \begin{equation}\label{eq:DS-i}
        \dist(x_{k} ; \mathcal{X}^*) \leq \tfrac{A}{(k+k_0)^{r}},
    \quad\quad\text{and}\quad\quad
        f(x_{k})-f^* \leq \tfrac{LA}{(k+k_0)^{r}}.
    \end{equation}
\end{thm}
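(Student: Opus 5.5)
We argue by induction on $k$, proving simultaneously that $x_k\in\T_\beta$ and $\dist(x_k;\X^*)\le A(k+k_0)^{-r}$; the function-value bound in \cref{eq:DS-i} then follows from the estimate $f(x_k)-f^*\le L\dist(x_k;\X^*)$ for $x_k\in\T_1$, obtained exactly as in \Cref{lem:ConvAna}~\ref{lem:ConvAna:1} (mean value theorem and $\|\zeta\|\le L$ on $\T_1\supseteq\T_\beta$). The base case is the hypothesis $\dist(x_0;\X^*)\le Ak_0^{-r}$. Membership in $\T_\beta$ at step $k+1$ will come for free: once we show $\dist(x_{k+1};\X^*)\le A(k+1+k_0)^{-r}\le Ak_0^{-r}$, the choice $k_0\ge A^{1/r}(\rho G_h/(\beta\mu))^{1/(r\nu)}$ gives $Ak_0^{-r}\le(\beta\mu/(\rho G_h))^{1/\nu}$. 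This yields a non-strict inequality, so we either accept $\le$ at the boundary or note that the strict version is inherited from strict inequalities in the recursion. This way we avoid the step-size bound of \Cref{lem:Basic1}~\ref{lem:Basic1:2}.

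For the inductive step, write $d_k:=\dist(x_k;\X^*)$. By \cref{eq:lem-Basic02} with $\|\zeta_k\|\le L$ we have
\[
d_{k+1}^2\le d_k^2-2(1-\beta)\tau\alpha_k d_k+\alpha_k^2=:\phi(d_k).
\]
We split into two cases, as in \Cref{lem:Basic1}. \emph{Case 1:} $d_k\le \alpha_k/((1-\beta)\tau)$. Here the convex quadratic $\phi$ on $[0,\alpha_k/((1-\beta)\tau)]$ is bounded by its endpoint values. Both endpoints give at most $\alpha_k^2\max\{1,\ldots\}$; more crudely, $d_{k+1}^2\le \alpha_k^2(1+1/((1-\beta)^2\tau^2))$. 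Hence $d_{k+1}\le \lambda(k+k_0)^{-r}\sqrt{1+1/((1-\beta)^2\tau^2)}=2^{-r}A(k+k_0)^{-r}$, and $(k+k_0)^{-r}2^{-r}\le (k+1+k_0)^{-r}$ because $k+1+k_0\le 2(k+k_0)$, using $k_0\ge1$. This is exactly why $A$ carries the factor $2^r$. \emph{Case 2:} $d_k> \alpha_k/((1-\beta)\tau)$. Then $\phi$ is increasing in $d_k$ beyond its vertex, and the vertex lies below this threshold. So $d_{k+1}^2\le\phi(A(k+k_0)^{-r})$, provided $A(k+k_0)^{-r}$ lies in the increasing region, which holds since $A\ge 2^r\lambda/((1-\beta)\tau)$. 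We then need
\[
A^2(k+k_0)^{-2r}-2(1-\beta)\tau\lambda A(k+k_0)^{-2r}+\lambda^2(k+k_0)^{-2r}\le A^2(k+k_0+1)^{-2r}.
\]

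The main obstacle is this last inequality. Multiplying by $(k+k_0)^{2r}$, it reduces to
\[
A^2\bigl(1-(1+\tfrac1{k+k_0})^{-2r}\bigr)\le 2(1-\beta)\tau\lambda A-\lambda^2.
\]
By Bernoulli's inequality the left side is at most $2rA^2/(k+k_0)\le 2rA^2/k_0\le(1-\beta)\lambda\tau A$, by the first term in the definition of $k_0$. It then remains to check $(1-\beta)\tau\lambda A\ge\lambda^2$, which holds since $A\ge 2^r\lambda/((1-\beta)\tau)\ge\lambda/((1-\beta)\tau)$. The only delicate points are this parameter bookkeeping and making sure the monotonicity region of $\phi$ is handled correctly in Case 2. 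If Case 1 alone does not close because of the $2^{-r}$ factor, we would fall back on bounding $\phi$ on the whole interval $[0,A(k+k_0)^{-r}]$ by its maximum at the two endpoints, which is the same pair of estimates.
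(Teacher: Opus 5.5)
Your proposal is correct and follows essentially the paper's argument: the paper's index set $I$ is exactly your split $d_k\le \alpha_k/((1-\beta)\tau)$. Its case (ii) is your Case~1, with the same $2^{-r}$ bookkeeping. Its case (iii) is your Case~2; there the paper absorbs $\alpha_k^2$ through the case condition rather than through $A\ge\lambda/((1-\beta)\tau)$, and it uses the tangent-line inequality for $t\mapsto A^2t^{-2r}$, which is your Bernoulli step. The paper's extra case $k+1\in I$ is redundant. Your worry about strictness in $\mathcal{T}_\beta$ is also moot: $x_0\in\mathcal{T}_\beta$ is assumed, and $(k+1+k_0)^{-r}<k_0^{-r}$ holds strictly for all later iterates.
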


\begin{proof}
    Let us proceed first inequality in \cref{eq:DS-i} by induction. 
    The base case \(k=0\) holds by the initialization hypothesis.  Now suppose that for some \(k\ge0\) the inductive hypothesis
    \(
    \dist(x_k;\mathcal{X}^*) \le A(k+k_0)^{-r}
    \)
    is true.
    Then,
    \[
    \dist(x_{k} ; \mathcal{X}^*) \leq \tfrac{A}{(k+k_0)^{r}} \le \tfrac{A}{k_0^{r}} \le \Big(\tfrac{\beta\mu}{\rho G_{h}}\Big)^{\nicefrac{1}{\nu}},
    \]
    i.e., $x_k\in \T_\beta$.
    It follows from \cref{lem:Basic1} and $\|\zeta_k\|\le L$ that
    \begin{equation}\label{eq:D-1}
        \dist^2(x_{k+1};\mathcal{X}^*) \leq \dist^2(x_{k};\mathcal{X}^*) -\tfrac{2(1-\beta)\tau\lambda}{(k+k_0)^{r}} \dist(x_{k};\mathcal{X}^*) + \tfrac{\lambda^2}{(k+k_0)^{2r}}.
    \end{equation}
    Let us consider the index set
    $$
    I:=\left\{i\in \mathbb{N} :~ \dist(x_{i};\mathcal{X}^*) \leq \tfrac{\lambda}{(1-\beta)\tau(i+k_0)^{r}}\right\}.
    $$
    There are three possible cases: $(i)$ $k+1\in I$; $(ii)$ $k+1\not\in I$ and $k\in I$; $(iii)$ $k, k+1\not \in I$.
    
    In Case~$(i)$, since $\tfrac{\lambda}{(1-\beta)\tau} < A$, clearly \(
    \dist(x_{k+1};\mathcal{X}^*) \le A(k+1+k_0)^{-r}
    \).
    Alternatively, in Case~$(ii)$, $k+1\not\in I$ and $k\in I$, from \cref{eq:D-1} we deduce that
    $$
        \dist^2(x_{k+1};\mathcal{X}^*) \leq \dist^2(x_{k};\mathcal{X}^*)  + \tfrac{\lambda^{2}}{(k+k_0)^{2r}}\leq \tfrac{\lambda^2}{(1-\beta)^2\tau^2(k+k_0)^{2r}} + \tfrac{\lambda^{2}}{(k+k_0)^{2r}} =\tfrac{A^2}{2^{2r}(k+k_0)^{2r}}
        \leq \tfrac{A^{2}}{(k+1+k_0)^{2r}}.
    $$
    Now turning to Case~$(iii)$ where $k+1, k\not\in I$, i.e., \( \tfrac{\lambda}{(k+k_0)^{r}}<(1-\beta)\tau\dist(x_{k};\mathcal{X}^*)\). Substituting this inequality into \cref{eq:D-1} results in
    \begin{align*}
        \dist^2(x_{k+1};\mathcal{X}^*) &< \dist^2(x_{k};\mathcal{X}^*) -\tfrac{2(1-\beta)\tau\lambda}{(k+k_0)^{r}} \dist(x_{k};\mathcal{X}^*) + \tfrac{(1-\beta)\tau\lambda}{(k+k_0)^{r}}\dist(x_{k};\mathcal{X}^*)\\
        &= \dist^2(x_{k};\mathcal{X}^*) -\tfrac{(1-\beta)\tau\lambda}{(k+k_0)^{r}} \dist(x_{k};\mathcal{X}^*).
    \end{align*}
    The function
    $\varphi(t) := t^2 - {(1-\beta)}\tau\lambda(k+k_0)^{-r}  t$ is a convex function
    over $[0,A(k+k_0)^{-r}]$ attaining its maximum at $A(k+k_0)^{-r}$.
    From the former inequality together with the inductive assumption $\dist(x_{k};\mathcal{X}^*)\le A(k+k_0)^{-r}$, we obtain that
    \begin{equation*}
        \dist^2(x_{k+1};\mathcal{X}^*) 
        < \varphi\big(\dist(x_{k};\mathcal{X}^*)\big)
        \leq \varphi\big(A(k+k_0)^{-r}\big)
        =
        \tfrac{A^{2}}{(k+k_0)^{2r}}- \tfrac{(1-\beta) \tau\lambda A} {(k+k_0)^{2r}} \le 
        \tfrac{A^{2}}{(k+k_0)^{2r}} - \tfrac{2r A^2} {(k+k_0)^{2r+1}} \le \tfrac{A^{2}}{(k+1+k_0)^{2r}},
    \end{equation*}
    where the third inequality comes from the lower bound of $k_0$ and for the last inequality we use the convexity of the function $t\mapsto \tfrac{A^{2}}{t^{2 r}}$ on positive real number set.\\
    The second inequality in \cref{eq:DS-i} follows from the first one and \Cref{lem:ConvAna}~\ref{lem:ConvAna:1},
    completing the proof.
\end{proof}

We conclude the convergence analysis by considering a geometrically decaying (GD) step-size,
\[
\alpha_k=\lambda q^k,\qquad \lambda>0,~~0<q<1.
\]
Unlike the diminishing step-size rule considered previously, the GD rule preserves the vanishing property of the step-size while yielding linear convergence. The following theorem establishes linear convergence of the projected subgradient method in terms of the distance to the solution set, the objective value gap, and the iterates.

\begin{thm}[{\bf Convergence rate of GD PSGA}]\label{thm:GDS}
    Let $\beta\in [1-\nicefrac{1}{\sqrt{2}},1)$, $\gamma\in (0,1)$, and $0<\lambda\le (1-\beta)\tau \Big(\tfrac{\beta\mu}{\rho G_{h}}\Big)^{\nicefrac{1}{\nu}}$. Set
    $$
    q:=\sqrt{1-\gamma(1-\beta)^2 \tau^{2}},\quad\quad
    \text{and} \quad\quad
    A:=\max\left\{\tfrac{\lambda}{(1-\beta)\tau},\dist(x_0;\mathcal{X}^*)\right\}.
    $$
    Let the sequence $\seq{x_{k}}$ be generated by PSGA with GD step-size $\alpha_{k}=\lambda q^{k}$ and the initial point $x_{0} \in \mathcal{T}_\beta$ satisfying
    \(\dist(x_0;\mathcal{X}^*)\leq\tfrac{\lambda}{\tau(1-\beta) - \sqrt{\tau^{2}(1-\beta)^2-(1-q^{2})}}\). Then, the following inequalities hold:
    \begin{equation}\label{eq:GDS-i}
        \dist(x_{k} ; \mathcal{X}^*) \leq A q^{k},
        \quad\quad f(x_{k})-f^* \leq LA q^{k},\quad\quad\|x_k-x^*\|\le \tfrac{\lambda q^k}{1-q},
    \end{equation}
    where $x^*$ denotes the limiting point of $\seq{x_k}$.
\end{thm}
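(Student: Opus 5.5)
Write $d_k:=\dist(x_k;\X^*)$ and $c:=(1-\beta)\tau$. The plan is to prove $d_k\le Aq^k$ by induction on $k$, in the spirit of \Cref{thm:DS}. The function-gap bound will then follow from \Cref{lem:ConvAna}~\ref{lem:ConvAna:1}, and the iterate bound from a summable step-length argument.

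The base case $k=0$ holds because $A\ge d_0$. Before running the induction I must check that every iterate stays in $\T_\beta$, so that \cref{eq:lem-Basic02} applies with $\|\zeta_k\|\le L$. Since $\alpha_k\le\lambda\le c\big(\tfrac{\beta\mu}{\rho G_h}\big)^{1/\nu}\le \min\{1,2c\}\big(\tfrac{\beta\mu}{\rho G_h}\big)^{1/\nu}$ (using $c\le 1$), \Cref{lem:Basic1}~\ref{lem:Basic1:2} gives $\seq{x_k}\subseteq\T_\beta$. For the inductive step, \cref{eq:lem-Basic02} yields
\[
d_{k+1}^2\le d_k^2-2c\lambda q^k d_k+\lambda^2q^{2k}.
\]
Set $t:=d_k/(\lambda q^k)$ and $a:=A/\lambda$. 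The induction hypothesis reads $t\le a$, and the goal becomes $t^2-2ct+1\le a^2q^2$. The left-hand side is a convex quadratic in $t$, so on $[0,a]$ its maximum is attained at an endpoint: it equals $1$ at $t=0$ and $a^2-2ca+1$ at $t=a$.

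I will check both endpoints.
\begin{enumerate}
\item At $t=a$ I need $a^2(1-q^2)-2ca+1\le0$, that is, $a$ must lie between the roots $\tfrac{c\pm\sqrt{c^2-(1-q^2)}}{1-q^2}$. Since $1-q^2=\gamma c^2<c^2$, these roots are real. The upper root equals $\tfrac{1}{c-\sqrt{c^2-(1-q^2)}}$, so the initialization hypothesis together with $1/c\le$ this root gives $a\le$ upper root. For the lower bound, $a\ge 1/c$ holds by the definition of $A$, and $\tfrac1c\ge\tfrac{c-\sqrt{c^2-(1-q^2)}}{1-q^2}$ is equivalent to $1-q^2\ge c^2-c\sqrt{c^2-(1-q^2)}$, which holds because $c\sqrt{c^2-\gamma c^2}\ge c^2(1-\gamma)$.
\item At $t=0$ I need $1\le a^2q^2$. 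Using $a\ge 1/c$, it suffices that $q^2\ge c^2$, i.e. $1-\gamma c^2\ge c^2$, i.e. $c^2(1+\gamma)\le1$. This is where the restriction $\beta\ge 1-1/\sqrt2$ should enter: it gives $c=(1-\beta)\tau\le 1/\sqrt2$, hence $2c^2\le1$, which is enough since $\gamma<1$.
\end{enumerate}
This endpoint verification is the main obstacle. The delicate part is pinning down the exact algebra of the roots, and the case $t=0$ is the one needing the $\beta$ restriction.

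For the remaining two bounds, $f(x_k)-f^*\le Ld_k\le LAq^k$ follows from \Cref{lem:ConvAna}~\ref{lem:ConvAna:1}, since $x_k\in\T_\beta\subseteq\T_1$. For the iterates, nonexpansiveness of the projection gives $\|x_{j+1}-x_j\|\le\alpha_j=\lambda q^j$. Hence $\seq{x_k}$ is Cauchy, and it converges to some $x^*$, which lies in $\X^*$ by \Cref{lem:ConvAna}~\ref{lem:ConvAna:3} because $d_k\to0$. Finally, $\|x_k-x^*\|\le\sum_{j\ge k}\lambda q^j=\tfrac{\lambda q^k}{1-q}$.
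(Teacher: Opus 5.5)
Your proof is correct and follows the paper's argument: the same induction on $\dist(x_k;\mathcal{X}^*)\le Aq^k$, the same recursion from \Cref{lem:Basic1}, the same endpoint analysis of the convex quadratic, and the same root bracketing of $A$. The function-gap and iterate bounds are obtained exactly as in the paper.

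There are two small variations, both relying on $(1-\beta)\tau\le 1/\sqrt{2}$:
\begin{itemize}
\item You check the $t=0$ endpoint directly via $q\ge(1-\beta)\tau$. The paper instead shows that this endpoint is dominated by the other one, using $A\ge 2(1-\beta)\tau\lambda$.
\item You obtain $\mathcal{T}_\beta$-invariance from \Cref{lem:Basic1}~\ref{lem:Basic1:2}. The paper obtains it from the inductive hypothesis $Aq^k\le A$.
\end{itemize}
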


\begin{proof}
    By \cref{lem:tau}, we have $\tau\in (0,1]$.
    Moreover, $0<q<1$ and \((1-\beta)^2\tau^{2}-(1-q^{2})>0\), i.e., the upper bound of \(\dist(x_0;\mathcal{X}^*)\) is well-defined.
    Let us verify the first inequality in \cref{eq:GDS-i} by induction.
    Clearly, $\dist(x_{0}; \mathcal{X}^*) \leq A=Aq^0$.
    Assuming that this inequality holds for $k$, we show it for $k+1$.
    Using the upper bound of $\lambda$ together with $x_0\in \mathcal{T}_\beta$, it holds that
    $$
    \dist(x_{k} ; \mathcal{X}^*) \leq A q^{k}\leq A =\max\left\{\tfrac{\lambda}{(1-\beta)\tau},\dist(x_0;\mathcal{X}^*)\right\}\leq \Big(\tfrac{\beta\mu}{\rho G_{h}}\Big)^{\nicefrac{1}{\nu}},
    $$
    i.e., $x_{k}\in \mathcal{T}_\beta$, and consequently, $\|\zeta_k\|\le L$. Hence,
    $$
    \dist^2(x_{k+1};\mathcal{X}^*) \leq \dist^2(x_{k};\mathcal{X}^*)- 2(1-\beta)\tau\lambda q^{k}  \dist(x_{k};\mathcal{X}^*)  + \lambda^{2} q^{2k},
    $$
    due to \cref{lem:Basic1}~\ref{lem:Basic1:1}.
    Noting that $\dist(x_{k};\mathcal{X}^*)\le A q^{k}$ (inductive assumption) and that the quadratic function
    \(\varphi(t) := t^{2} - 2(1-\beta)\tau\lambda q^{k}t + \lambda^{2} q^{2k}\) is convex on $[0,Aq^{k}]$, attaining its maximum at $0$ or $Aq^{k}$, we obtain
    $$
        \dist^2(x_{k+1};\mathcal{X}^*) \leq \varphi \big(\dist(x_{k};\mathcal{X}^*)\big) \leq \max \left\{\varphi(0), \varphi(A q^{k})\right\}
         = q^{2k}\Big(
        A^{2} - 2(1-\beta)\tau\lambda A   + \lambda^{2}\Big),
    $$
    where the equality comes from \(A \ge \tfrac{\lambda}{(1-\beta)\tau}\ge 2(1-\beta)\tau\lambda\) using the lower bound of $\beta$.
    Thus, to ensure $\dist^2(x_{k+1};\mathcal{X}^*)\leq A^{2} q^{2(k+1)}$, it suffices to check
    \begin{equation}\label{eq:GDS-1}
        A^{2}-  \lambda \tau A   + \lambda^{2} \leq A^{2}q^{2}.
    \end{equation}
    Let us consider the quadratic equation \(\left(1  -q^{2}\right)Z^{2} - 2(1-\beta)\tau\lambda Z  + \lambda^{2} =0,\)
    whose positive roots are
    $$\tfrac{\lambda\tau(1-\beta) \pm \lambda\sqrt{ (1-\beta)^2\tau^{2} -(1-q^{2})}}{1  -q^{2}}=
    \tfrac{\lambda}{\tau(1-\beta) \mp \sqrt{\tau^{2}(1-\beta)^2-(1-q^{2})}}.$$
    Using the upper bound of $\dist(x_0;\mathcal{X}^*)$, we come to
    $$\tfrac{\lambda}{\tau(1-\beta) + \sqrt{\tau^{2}(1-\beta)^2-(1-q^{2})}} \leq \tfrac{\lambda}{\tau(1-\beta)}\leq A=\max\left\{\tfrac{\lambda}{(1-\beta)\tau},\dist(x_0;\mathcal{X}^*)\right\}\le \tfrac{\lambda}{\tau(1-\beta) - \sqrt{\tau^{2}(1-\beta)^2-(1-q^{2})}},$$
    which guarantees \cref{eq:GDS-1}.
    Therefore, the inductive step is complete, adjusting the first inequality in~\cref{eq:GDS-i}.\\
    The second inequality in \cref{eq:GDS-i} follows directly from the first one combined with \Cref{lem:ConvAna}~\ref{lem:ConvAna:1}.\\
    Finally, regarding the third inequality in~\cref{eq:GDS-i}, by \Cref{lem:ConvAna}~\ref{lem:ConvAna:3}, the sequence $\seq{x_k}$ converges to a global optimal point $x^*\in \X^*$ and
    $$\|x_k-x^*\| \le \sum_{i\ge k} \|x_{i}-x_{i+1}\| \le \sum_{i\ge k}\alpha_i = \sum_{i\ge k} \lambda q^i = \tfrac{\lambda q^k}{1-q},$$
    validating the desired bound.
\end{proof}

\section{Concluding remarks}\label{sec:conclusion}
In this paper, we introduced the class of relatively weakly convex functions, extending the classical notion of weak convexity through a distance-generating function. We established several fundamental properties of this class, including characterization results, calculus rules, and illustrative examples. We also analyzed its optimization landscape and identified a neighborhood of the global minimizers that was free of saddle points. Motivated by this geometric characterization, we proposed the Projected SubGradient Algorithm (PSGA) together with several step-size strategies. Under a sharpness error bound, we showed that, when initialized within the identified neighborhood, the iterates generated by PSGA converged linearly to a global minimizer. These results broadened the theoretical framework for weakly convex optimization and provided convergence guarantees for first-order methods in the relative setting.


	

	\ifarxiv
		\bibliographystyle{plain}
	\else
		\phantomsection
		\addcontentsline{toc}{section}{References}
		\bibliographystyle{spmpsci}
	\fi
	\bibliography{Bibliography}

\end{document}